\newcommand{\Q}{\mathcal{Q}}
\newcommand{\eps}{\varepsilon}
\newcommand{\pd}{\partial}
\newcommand{\F}{\mathcal{F}}
\newcommand{\N}{\mathbb{N}}
\newcommand{\R}{\mathbb{R}}
\newtheorem{theorem}{Theorem}[section]
\newtheorem{corollary}[theorem]{Corollary}
\newtheorem{lemma}[theorem]{Lemma}
\newtheorem{proposition}[theorem]{Proposition}
\theoremstyle{definition}
\newtheorem{remark}[theorem]{Remark}
\numberwithin{equation}{section}
\begin{document}

\title[Hardy inequalities for $p-$Laplacians with  Robin boundary conditions]
{Hardy inequalities for ${\bf p-}$Laplacians with  Robin boundary conditions}

\author{Tomas Ekholm}
\address{Tomas Ekholm, Department of Mathematics,
Royal Institute of Technology, S-100 44 Stockholm, Sweden}
\email{tomase@math.kth.se}

\author{Hynek Kova\v r\'{\i}k}
\address{Hynek Kova\v r\'{\i}k, DICATAM, Sezione di Matematica, Universit\`a degli studi di Brescia,
Via Branze, 38 - 25123  Brescia, Italy}
\email{hynek.kovarik@unibs.it}

\author {Ari Laptev}
\address {Ari Laptev, Imperial College London\\ Huxley Building, 180 Queen's Gate \\
London SW7 2AZ, UK 
}
\email{a.laptev@imperial.ac.uk}


\begin{abstract}
In this paper we study the best constant in a Hardy inequality for the $p-$Laplace operator on convex domains 
with Robin boundary conditions. We show, in particular, that the best constant equals $((p-1)/p)^p$ whenever Dirichlet boundary conditions are imposed on a subset of the boundary of non-zero measure. We also discuss some generalizations to non-convex 
domains.
\end{abstract}

\maketitle

{\bf  AMS 2000 Mathematics Subject Classification:} 47F05, 39B72\\

{\bf  Keywords:} $p-$Laplacian, Robin boundary conditions, Hardy inequality \\


\section{\bf Introduction}
\noindent Let $\Omega\subset\R^n$ be an open bounded domain and denote by 
\begin{equation}\label{min}
\delta(x) =  \min_{y\in\partial\Omega} |x-y|
\end{equation}
the distance between a given $x\in\Omega$ and the boundary of $\Omega$. The Hardy inequality for the p-Laplace operator with Dirichlet boundary conditions on $\partial\Omega$:
\begin{equation} \label{hardy-general}
\int_\Omega |\nabla u(x)|^p \,dx \ \geq \ K  \int_{\Omega} \frac{|u(x)|^p}{\delta(x)^p}\, dx, \qquad \forall\ u \in W^{1,p}_0(\Omega), \quad p>1,
\end{equation}
is closely related to the variational problem 
\begin{equation} \label{var-prob-dir}
\mu_p(\Omega)  : =  \inf_{u\in W_0^{1,p}(\Omega)}\, \frac{\int_\Omega |\nabla u(x)|^p \,dx}{\int_{\Omega} \ |u(x)/\delta(x)|^p\, dx}.
\end{equation}
Hence $\mu_p(\Omega)$ is the best possible value of the constant $K$ in \eqref{hardy-general}. Hardy showed in \cite{h} that inequality \eqref{hardy-general} holds with some $K>0$ in dimension one. In higher dimensions it is known, see \cite{ok}, that if $\Omega$ has Lipschitz continuous boundary, then $\mu_p(\Omega)>0$. 
In general, $\mu_p(\Omega)$ depends on the domain $\Omega$ and satisfies the upper bound
$$
\mu_p(\Omega) \leq C_p :=  \Big(\frac{p-1}{p}\Big)^p,
$$
see \cite{MMP}.
However, if $\Omega$ is convex, then $\mu_p(\Omega) = C_p$.
The latter was first proved for $p=n=2$, see \cite[Sec.~5.3]{D3} or \cite[Sec.~1.5]{D1}, then in \cite{MS} for $n=2$ and any $p>1$, and finally in \cite{MMP}
for any $n$ and any $p>1$. Moreover, it was shown in \cite{MMP} that $\mu_2(\Omega) =C_2$ if and only if the variational problem \eqref{var-prob-dir} has no minimiser. The fact that for convex domains there is no minimiser of \eqref{var-prob-dir} opens a possibility to improve inequality \eqref{hardy-general}, even with the sharp constant $K=C_p$, by adding to its right hand side a positive contribution. Such improvements, with various forms of the remainder terms, have been obtained in \cite{A1, A2, AW, BM, FMT, HHL} for $p=2$ and later in \cite{T} for $p\neq 2$. As for non-convex domains, it is known, due to \cite{A}, that in the case $n=p=2$ for simply connected domains one has $\mu_2(\Omega) \geq 1/16$, see also \cite{LS}. For a throughout discussion of various Hardy inequalities for $p=2$ we refer to 
\cite{D2} and references therein. 
\smallskip

In this paper we consider an analogue of the variational problem \eqref{var-prob-dir} for a Robin Laplacian. This means that we replace the numerator of \eqref{var-prob-dir} by the functional 
\begin{equation} \label{form}
\Q_p [\sigma, u] = \int_\Omega |\nabla u|^p\, dx  + \int_{\partial\Omega} \sigma\, |u|^p\, d\nu, \qquad u\in \F(\Omega),
\end{equation}
where $d\nu$ denotes the surface measure on $\partial\Omega$, $\sigma: \partial\Omega\to [0,+\infty]$ is a function which defines the boundary conditions and $\F(\Omega)$ is a suitable family of test functions. The function space $\F(\Omega)$ clearly depends on the choice of $\sigma$. Notice that with the choice $\sigma=+\infty$, and consequently $\F(\Omega)=W_0^{1,p}(\Omega)$,  we arrive at the Dirichlet boundary conditions and hence at problem \eqref{var-prob-dir}. 

To pass from Dirichlet boundary conditions to Robin boundary conditions means to take $\sigma\neq +\infty$. In order to make the choice of $\sigma$ as general as possible we will impose the Dirichlet boundary on a part of the boundary $\Gamma\subseteq\partial\Omega$, which might be empty, and Robin boundary conditions on the remaining part $\partial\Omega\setminus\Gamma$ ;
\begin{equation} \label{lgamma}
\sigma\in \Sigma_\Gamma := \big\{ f :\partial\Omega \to [0,+\infty] \ ,  \ f = +\infty \ \text{on}\ \Gamma,\ f\in L^\infty(\partial\Omega\setminus\overline{\Gamma}) \big\}.
\end{equation}
Consequently, we choose 
$$
\F(\Omega) = W^{1,p}_{0,\Gamma}(\Omega) := \overline{\big\{ u\in C^1(\overline{\Omega})\, :\, u |_{\Gamma} \, =0\big\} } ^{\ \|\cdot\|_{W^{1,p}(\Omega)}}  .
$$
Obviously, the weight function in the denominator of \eqref{var-prob-dir} has to be modified accordingly, since the test functions from $ W^{1,p}_{0,\Gamma}(\Omega)$ do not vanish on the whole $\partial\Omega$. 

In order to define our variational problem we need to introduce some notations.
Let $S$ be the singular set of $\Omega$, i.e.~the set of points in $\Omega$ for which there exist at least two points $y_1, y_2 \in \partial\Omega$ where the minimum in \eqref{min} is achieved. Hence, for $x \in \Omega\setminus S$ let $\pi(x) = y$, where $y$ is the unique point on $\partial \Omega$ satisfying $\delta(x) = |x - y|$. In analogy with the case $p=2$, see \cite{kl}, we then define the function $\alpha:\Omega\setminus S\to [0,+\infty]$ by 
\begin{equation} \label{alphax}
\alpha(x)= \frac{p-1}{p}\, \sigma\big(\pi(x)\big)^{\frac{1}{1-p}}.
\end{equation}
We pass from the weight function $\delta(x)^{-p}$ in \eqref{var-prob-dir} to the weight function
$$
(\delta(x)+ \alpha(x))^{-p},
$$
which takes into account the boundary conditions defined in term of $\sigma$. For example,  if $\sigma=+\infty$, then $\alpha=0$ as expected. Note also that the function is defined almost everywhere in $\Omega$ since the set $S$ has Lebesgue measure zero, see \cite{LN}. Hence we are led to the variational problem 
\begin{equation} \label{var-prob}
\lambda_p(\Omega, \sigma) := \inf_{u\in W^{1,p}_{0,\Gamma}(\Omega)} \frac{\Q_{p}[\sigma, u]}{\| u\|^p_{p,\sigma} }\, ,
\end{equation}
with 
\begin{equation} \label{sigma-norm}
\| u\|_{p,\sigma} = \left(\int_{\Omega} \frac{|u(x)|^p}{(\delta(x)+ \alpha(x))^p}\, dx \right)^{\frac 1p} , \qquad u\in W^{1,p}_{0,\Gamma}(\Omega). 
\end{equation}

\noindent We are going to establish a relation between $\lambda_p(\Omega, \sigma)$ on one hand, and the function $\sigma$ and geometry of $\Omega$ on the other hand.
The main results of this paper are the following:

\section{\bf Main results}

\begin{theorem} \label{thm-1}
Let $\Omega\subset\R^n$ be open bounded and convex with $\partial\Omega$ of class $C^2$. Let $\Gamma\subseteq\partial\Omega$. Then for any 
$\sigma \in \Sigma_\Gamma$ we have $\lambda_p(\Omega, \sigma) \geq C_p$. Moreover, 
\begin{equation} \label{equiv}
\lambda_p(\Omega, \sigma) = C_p \quad \Leftrightarrow \quad \Gamma \neq \emptyset.
\end{equation} 
\end{theorem}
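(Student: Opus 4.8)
\emph{Proof strategy.} The statement contains three assertions: the bound $\lambda_p(\Omega,\sigma)\ge C_p$, the implication $\Gamma\ne\emptyset\Rightarrow\lambda_p(\Omega,\sigma)\le C_p$, and the implication $\Gamma=\emptyset\Rightarrow\lambda_p(\Omega,\sigma)>C_p$, and I would treat them in this order.

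\emph{The lower bound.} The plan is to reduce \eqref{var-prob} to a one-dimensional Hardy inequality by foliating $\Omega$ along the normal rays to $\partial\Omega$. Since $\partial\Omega\in C^2$, the map $\Psi(y,t)=y-t\,\nu(y)$, with $\nu$ the outer unit normal, is a diffeomorphism of $\{(y,t):y\in\partial\Omega,\ 0<t<\ell(y)\}$ onto $\Omega\setminus S$, where $\ell(y)$ is the distance from $y$ to the cut locus; its Jacobian is $J(y,t)=\prod_{i=1}^{n-1}(1-t\,\kappa_i(y))$, and since $\kappa_i\ge 0$ by convexity, $t\mapsto J(y,t)$ is nonincreasing, smooth on $[0,\ell(y)]$, with $J(y,0)=1$ and $J(y,\ell(y))\ge 0$. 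For $v(y,t)=u(\Psi(y,t))$ one has $|\partial_t v|=|\nabla u\cdot\nabla\delta|\le|\nabla u|$, $\delta(\Psi(y,t))=t$, and, by \eqref{alphax}, $\alpha(\Psi(y,t))=a(y):=\tfrac{p-1}{p}\,\sigma(y)^{1/(1-p)}$; since $|S|=0$, the change of variables rewrites the three functionals of \eqref{var-prob} as integrals over $\partial\Omega$ of one-dimensional $t$-integrals weighted by $J$. It then suffices to prove, for a.e.\ $y$,
\begin{equation}\label{plan-1d}
\int_0^{\ell}|v'(t)|^p\,J(y,t)\,dt+\sigma(y)\,|v(0)|^p\ \ge\ C_p\int_0^{\ell}\frac{|v(t)|^p}{(t+a)^p}\,J(y,t)\,dt ,\qquad a=a(y),\ \ell=\ell(y).
\end{equation}
For this I would use the ground state $\phi(s)=s^{(p-1)/p}$, which satisfies $-\big((\phi')^{p-2}\phi'\big)'=C_p\,\phi^{p-1}s^{-p}$: with $s=t+a$, $g(s)=v(s-a)$, the elementary inequality $p(\phi'/\phi)^{p-1}|g|^{p-1}|g'|\le|g'|^p+(p-1)(\phi'/\phi)^p|g|^p$ shows that $s\mapsto-J(y,s-a)\,(\phi'(s)/\phi(s))^{p-1}|g(s)|^p$ has distributional derivative at least $J(y,s-a)\big(C_p|g(s)|^p s^{-p}-|g'(s)|^p\big)ds$, the extra nonnegative contribution coming from the monotonicity of $J$ (a nonpositive factor against $dJ\le 0$). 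Integrating over $[a,a+\ell]$ and using $(\phi'(a)/\phi(a))^{p-1}=\big(\tfrac{p-1}{pa}\big)^{p-1}=\sigma(y)$ at the left endpoint together with the nonnegativity of the boundary term at the right endpoint (which carries the factor $J(y,\ell)\ge 0$) yields \eqref{plan-1d}; for $y\in\Gamma$, where $a=0$ and $v(0)=0$, this is the classical Hardy inequality. Summing over $\partial\Omega$ gives $\Q_p[\sigma,u]\ge C_p\,\|u\|_{p,\sigma}^p$ for every $u\in W^{1,p}_{0,\Gamma}(\Omega)$, hence $\lambda_p(\Omega,\sigma)\ge C_p$; I would keep track of the nonnegative remainders dropped here, since they re-enter below.

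\emph{The implication $\Gamma\ne\emptyset\Rightarrow\lambda_p(\Omega,\sigma)=C_p$.} By the lower bound it suffices to exhibit admissible functions along which the Rayleigh quotient tends to $C_p$. Fix $y_0\in\Gamma$ of density one with respect to surface measure (such a point exists whenever $\Gamma$ has positive measure), flatten $\partial\Omega$ near $y_0$ by a $C^2$ chart, and in the normal coordinates $(y,t)$ set $u_\eps(y,t)=\zeta(y/r)\,f_\eps(t)$, where $\zeta$ is a fixed compactly supported bump and $f_\eps$ is the standard logarithmic profile ($f_\eps(t)=t^{(p-1)/p}$ for $\eps\le t\le r/2$, affine on $[0,\eps]$, cut off to $0$ past $r$). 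Since $f_\eps(0)=0$ we have $u_\eps\in W^{1,p}_0(\Omega)\subset W^{1,p}_{0,\Gamma}(\Omega)$, so the boundary term in $\Q_p$ is absent; because $\pi(\Psi(y,t))=y$, the weight $(\delta+\alpha)^{-p}$ equals $t^{-p}$ exactly on the rays issued from $\Gamma$ and is bounded on the remaining rays (there $\alpha$ is bounded below by a positive constant). A direct computation then gives $\Q_p[\sigma,u_\eps]=C_p\big(\int_{B(y_0,r)\cap\partial\Omega}\zeta^p\big)\log\tfrac1\eps+O_r(1)$ and $\|u_\eps\|_{p,\sigma}^p=\big(\int_{B(y_0,r)\cap\Gamma}\zeta^p\big)\log\tfrac1\eps+O_r(1)$. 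Letting $\eps\to0$ and then $r\to0$, the density-one property drives the ratio of the leading coefficients to $1$, so $\lambda_p(\Omega,\sigma)\le C_p$, and equality follows; when $\Gamma$ has nonempty relative interior one may simply take $y_0$ interior and skip the density argument.

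\emph{The implication $\Gamma=\emptyset\Rightarrow\lambda_p(\Omega,\sigma)>C_p$, which I expect to be the main obstacle.} Now $\sigma\in L^\infty(\partial\Omega)$, hence $0\le(\delta+\alpha)^{-p}\le\alpha_*^{-p}$ a.e.\ with $\alpha_*:=\tfrac{p-1}{p}\|\sigma\|_\infty^{1/(1-p)}>0$, so $\|\cdot\|_{p,\sigma}$ is comparable to $\|\cdot\|_{L^p(\Omega)}$; by the compact embeddings $W^{1,p}(\Omega)\hookrightarrow L^p(\Omega)$ and $W^{1,p}(\Omega)\hookrightarrow L^p(\partial\Omega)$ together with weak lower semicontinuity, the infimum $\lambda_p(\Omega,\sigma)$ is attained by some $u\not\equiv0$, which may be taken nonnegative. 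Then $u$ weakly solves $-\Delta_p u=\lambda_p(\Omega,\sigma)\,(\delta+\alpha)^{-p}u^{p-1}$ with the Robin condition on $\partial\Omega$; since the right-hand side has a bounded coefficient, $u\in C^{1,\gamma}_{\loc}(\Omega)$, and being nonnegative and $p$-superharmonic, $u>0$ in $\Omega$. Suppose $\lambda_p(\Omega,\sigma)=C_p$. Feeding $u$ back into the lower bound argument forces every dropped remainder to vanish; in particular
\[
\int_{\partial\Omega}\int_0^{\ell(y)}\Big(\tfrac{p-1}{p(t+a(y))}\Big)^{p-1}|v(y,t)|^p\,\big(-\partial_t J(y,t)\big)\,dt\,d\nu(y)=0,
\]
which forces $v(y,\cdot)\equiv0$ on $(0,\ell(y))$ at every $y$ where the mean curvature $H(y)$ is positive, since there $J(y,\cdot)$ is strictly decreasing. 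But a bounded convex domain with $C^2$ boundary has $H$ continuous and not identically zero (otherwise the second fundamental form would vanish identically and $\partial\Omega$ would be a compact flat hypersurface), so $\{H>0\}$ is a nonempty relatively open subset of $\partial\Omega$, and the union of the corresponding normal rays contains an open subset of $\Omega$ on which $u\equiv0$ — contradicting $u>0$. Hence $\lambda_p(\Omega,\sigma)>C_p$. The crux is precisely this last step: the hypothesis $\Gamma=\emptyset$ is exactly what makes the Hardy weight bounded and the associated embedding compact, so that a minimizer exists, after which one must extract from the equality case enough rigidity — ultimately the geometric fact that a convex body is never flat on an open portion of its boundary — to reach a contradiction.
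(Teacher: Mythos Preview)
Your plan is correct but departs from the paper in two places. For the lower bound, the paper does not foliate by curved normal rays; it first proves a purely one-dimensional inequality (Lemma~\ref{lem1}) carrying a built-in $L^p$ remainder,
\[
\int_0^b |u'|^p\,dt + \sigma\,|u(0)|^p \ \ge\ C_p\int_0^b\frac{|u|^p}{(t+\alpha)^p}\,dt \;+\; \frac{(p-1)\,C_p}{(b+\alpha)^p}\int_0^b|u|^p\,dt,
\]
obtained via a Young-type trick with the auxiliary function $f(t)=-(p-1)^{p-1}(t+\alpha)^{1-p}$, and then approximates $\Omega$ by convex polytopes (so $J\equiv1$ on each face) to transfer it; your direct foliation with the monotone Jacobian is conceptually cleaner and skips the polytope limit. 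The more consequential difference is the case $\Gamma=\emptyset$: the paper's remainder above globalises to $(p-1)C_p\int_\Omega |u|^p/(R_{in}+\alpha)^p$, and since $(\delta+\alpha)/(R_{in}+\alpha)$ is bounded below by a positive constant when $\sigma\in L^\infty(\partial\Omega)$, this yields the explicit quantitative gap of Theorem~\ref{cor-1} straight from the Hardy inequality---no regularity, no maximum principle, no curvature fact is needed. Your route (a minimiser exists by compactness, is strictly positive by the strong maximum principle for $p$-superharmonic functions, and equality would force the Jacobian remainder to vanish, contradicting the impossibility of a bounded convex $C^2$ body being totally flat) is valid but heavier, and gives only $\lambda_p>C_p$ without the constant; conversely, your curvature-based remainder is geometrically finer and could be the right tool in situations where an in-radius bound is too crude.
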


\smallskip

\begin{remark}
Note that Theorem \ref{thm-1} includes also the extreme cases $\Gamma=\emptyset$ and $\Gamma=\pd\Omega$. The first part of the statement, i.e. the inequality $\lambda_p(\Omega, \sigma) \geq C_p$ is proven in Proposition \ref{prop-hardy} which provides a generalization of the Hardy inequality obtained in \cite{kl} for $p=2$. 

The second part of the claim is a consequence of Proposition \ref{prop-sharp}. Equivalence \eqref{equiv} is closely related to the question of the existence of a minimiser for the variational problem \eqref{var-prob}, see Proposition \ref{prop-min}.
\end{remark}

\begin{remark}
Let us comment on the sharpness of the lower bound $\lambda_p(\Omega, \sigma) \geq C_p$. The bound is sharp in the sense that the constant $C_p$ cannot replaced by a bigger one and remain independent of $\sigma$, see section \ref{sec-const} for details. However, if $\Gamma=\emptyset$, then for a given $\sigma\in\Sigma_\Gamma$ Theorem \ref{thm-1} implies that $\lambda_p(\Omega, \sigma) > C_p$. The following Theorem quantifies the gap between $\lambda_p(\Omega, \sigma)$ and $C_p$ in terms of the $\|\sigma\|_{L^\infty(\partial\Omega)}$. 
\end{remark}

\smallskip

\begin{theorem} \label{cor-1}
Let $\Omega$ be as in Theorem \ref{thm-1}. If $\Gamma=\emptyset$, then for any 
$\sigma \in \Sigma_\Gamma$ it holds
\begin{equation} \label{non-sharp}
\lambda_p(\Omega, \sigma) \ \geq \  C_p\, \left(1+ \Big(1+ p\, R_{in}\ \|\sigma\|_{L^\infty(\partial\Omega)}^{\frac{1}{p-1}} \Big)^{-p} \right),
\end{equation}
where 
$$
R_{in} = \sup_{x\in\Omega} \delta(x)
$$
is the in-radius of $\Omega$. 
\end{theorem}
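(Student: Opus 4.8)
The plan is to run the same one-dimensional comparison-function argument that underlies the Hardy inequality in Proposition \ref{prop-hardy}, but to keep track of the positive term that is usually thrown away. Recall that the standard proof of $\lambda_p(\Omega,\sigma)\ge C_p$ for convex $\Omega$ works pointwise along the normal rays: one writes a vector field $F(x)$ (morally $F=-(C_p)^{1/p}\,\mathrm{sgn}\,\nabla\delta\,(\delta+\alpha)^{1-p}\cdots$, the $p$-Laplacian analogue of $\nabla\log$ of the distance weight), checks that $-\mathrm{div}\,F \ge C_p(\delta+\alpha)^{-p}$ using convexity (which makes the mean-curvature contribution have the right sign), and integrates by parts against $|u|^p$; the boundary term is exactly controlled by $\int_{\partial\Omega}\sigma|u|^p\,d\nu$ thanks to the definition \eqref{alphax} of $\alpha$. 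The first step is therefore to redo this computation retaining the full gradient term: the $p$-Laplacian Picone/convexity inequality produces, beyond $C_p(\delta+\alpha)^{-p}|u|^p$, an extra nonnegative term of the form $c_p\,(\delta+\alpha)^{-p}\,|u - (\text{something})|^p$ or a term $\,\big||\nabla u|^p - (\text{the radial part})\big|$. Concretely I expect to arrive at an inequality
\[
\Q_p[\sigma,u] \;\ge\; C_p \int_\Omega \frac{|u|^p}{(\delta+\alpha)^p}\,dx \;+\; C_p \int_\Omega \frac{|u|^p}{(\delta+\alpha)^p}\cdot g(x)\,dx
\]
for some explicit nonnegative $g$ depending on $\delta$, $\alpha$ and the curvature, which is the remainder we must bound below.

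The second step is to estimate that remainder from below by a multiple of $\int_\Omega (\delta+\alpha)^{-p}|u|^p\,dx$ itself, which will give the self-improving bound $\lambda_p \ge C_p(1+\eta)$. Since $\Gamma=\emptyset$, we have $\alpha(x)=\tfrac{p-1}{p}\,\sigma(\pi(x))^{1/(1-p)}\ge \tfrac{p-1}{p}\,\|\sigma\|_{L^\infty}^{-1/(p-1)}=:a_0>0$ uniformly, and $\delta(x)\le R_{in}$. The worst case for the remainder is when $\delta$ is as large as possible relative to $\alpha$, i.e. $\delta=R_{in}$, $\alpha=a_0$, so that $(\delta+\alpha)/\alpha \le 1 + R_{in}/a_0 = 1 + p\,R_{in}\|\sigma\|_{L^\infty}^{1/(p-1)}$. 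Tracking the precise shape of $g$ — it should essentially be $\big(\alpha/(\delta+\alpha)\big)^{p}$ up to a bounded factor, coming from the part of the radial integration that "did not need" to be spent on the weight — and minimising it over the admissible range of $(\delta,\alpha)$ yields the factor $\big(1+p R_{in}\|\sigma\|_{L^\infty}^{1/(p-1)}\big)^{-p}$ in \eqref{non-sharp}. The cleanest route may be to choose, in the vector-field argument, the comparison weight $(\delta + t\,\alpha)^{-p}$ with a free parameter $t$, or equivalently to test against $(\delta+\alpha)^{-\gamma}$ with $\gamma$ slightly different from $p-1$, and optimise; this is the mechanism that produces a strict gap once $\alpha$ is bounded away from $0$.

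The main obstacle, as usual for $p\ne 2$, is the convexity step: the $p$-Laplacian has no exact Picone identity, so the extra nonnegative remainder has to be extracted from the elementary inequality $|b|^p \ge |a|^p + p|a|^{p-2}a\cdot(b-a) + c_p\,|b-a|^p$ (valid for $p\ge 2$) or its $1<p<2$ counterpart $|b|^p \ge |a|^p + p|a|^{p-2}a\cdot(b-a) + c_p\,|b-a|^p(|a|+|b|)^{p-2}$, and the latter weight $(|a|+|b|)^{p-2}$ must be controlled along the normal rays before one can integrate. Handling these two ranges of $p$ separately, and checking that the curvature term from $-\mathrm{div}\,(\nabla\delta)$ on a convex $C^2$ domain only helps (never hurts) the remainder, is where the real work lies; everything else is the bookkeeping of plugging $\delta\le R_{in}$ and $\alpha\ge a_0$ into an explicit one-variable minimisation.
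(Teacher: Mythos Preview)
Your plan overshoots: the remainder you are looking for is \emph{already} sitting in Proposition~\ref{prop-hardy}. Inequality~\eqref{hardy-eq} reads
\[
\Q_p[\sigma,u]\ \ge\ C_p\int_\Omega\frac{|u|^p}{(\delta+\alpha)^p}\,dx\;+\;(p-1)\,C_p\int_\Omega\frac{|u|^p}{(R_{in}+\alpha)^p}\,dx,
\]
so the ``positive term that is usually thrown away'' is the second integral, and it is already of the shape $g(x)(\delta+\alpha)^{-p}|u|^p$ with $g(x)=\big((\delta+\alpha)/(R_{in}+\alpha)\big)^p$. The paper simply divides by $\|u\|_{p,\sigma}^p$ and bounds
\[
\frac{\delta(x)+\alpha(x)}{R_{in}+\alpha(x)}\ \ge\ \frac{\alpha_{\min}}{R_{in}+\alpha_{\min}}
= \Big(1+\tfrac{p}{p-1}\,R_{in}\,\|\sigma\|_{L^\infty}^{1/(p-1)}\Big)^{-1},
\]
which gives \eqref{non-sharp} directly. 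No further analysis is needed; in particular there is no need to split $p\ge 2$ versus $1<p<2$, and no curvature term has to be controlled beyond what is already absorbed in the proof of Proposition~\ref{prop-hardy}.

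The mechanism you sketch---extracting a remainder from the strengthened convexity inequality $|b|^p\ge |a|^p+p|a|^{p-2}a\cdot(b-a)+c_p|b-a|^p$---would produce a leftover of the form $\int c_p\,|\nabla u-(\text{radial profile})|^p$, not a term of the form $\int g(x)(\delta+\alpha)^{-p}|u|^p$. Converting the former into the latter is not automatic (the Picone-type remainder vanishes on the one-dimensional profile $t\mapsto(t+\alpha)^{(p-1)/p}$, for every $\alpha$), so your route does not lead to the claimed $g\approx(\alpha/(\delta+\alpha))^p$ without a separate argument. The actual source of the extra term is Lemma~\ref{lem1}: in the one-dimensional estimate one keeps the contribution $|f(b)|^{p/(p-1)}$ in the upper bound for $B$, and this is exactly what yields the additional $(p-1)C_p(b+\alpha)^{-p}\int|u|^p$ on the right-hand side of \eqref{eq-1d}, and hence the second integral in \eqref{hardy-eq} after integrating over normal rays.
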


\subsection{Outline of the paper}
We start by the proof of an $L^p$ version of the Hardy inequality for Robin Laplacians, see section \ref{sec-hardy}. Then we provide the proofs of our main results; this is done in section \ref{sec-proofs}. 
In section \ref{sec-conc} we study the behavior of the minimising sequences of the variational problem \eqref{var-prob} in the case when $\lambda_p(\Omega, \sigma) =C_p$, which corresponds to $\Gamma\neq\emptyset$. In particular, we show that minimising sequences, under certain conditions, concentrate on $\Gamma$. Finally, section \ref{sec-outside} is dedicated to the analysis of a hardy-type inequality on a particular non-convex domain, namely on a complement of a ball.

\smallskip

\section{\bf A Hardy inequality}
\label{sec-hardy}

\noindent Similarly as in the case $p=2$, see \cite{kl}, we first establish an appropriate one-dimensional estimate. 

\begin{lemma}\label{lem1}
Let $b>0$ and assume that $u$ belongs to $AC[0,b]$, the space of absolutely continuous functions on $[0,b]$.  
Then for any $\sigma\geq 0$ we have
\begin{align} \label{eq-1d}
\int_0^b |u'(t)|^p \, dt +  \sigma\, |u(0)|^p & \, \geq\,  C_p 
\int_0^b \frac{|u(t)|^p}{(t+\alpha)^p} \, dt  + \frac{(p-1)\, C_p}{(b+\alpha)^p} \int_0^b |u(t)|^p \, dt,
\end{align}
where
\begin{equation} \label{alpha}
\alpha = \frac{p-1}{p}\, \sigma^{\frac{1}{1-p}}.
\end{equation}
\end{lemma}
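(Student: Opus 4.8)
The plan is to reduce the inequality to a convexity/substitution argument adapted from the classical one-dimensional Hardy inequality. First I would introduce the weight $w(t) = (t+\alpha)^{-(p-1)}$ and test the integrand against the exact minimiser of the pure Hardy part, namely functions behaving like $(t+\alpha)^{(p-1)/p}$. Concretely, write $u(t) = (t+\alpha)^{(p-1)/p}\, v(t)$ and compute $u'(t)$; this transforms $\int_0^b |u'|^p\,dt$ into an expression whose leading term is $C_p\int_0^b (t+\alpha)^{-p}|u|^p\,dt$ plus cross terms and a term involving $|v'|^p$. The boundary term $\sigma|u(0)|^p = \sigma\,\alpha^{(p-1)}|v(0)|^p$ is exactly tuned, via \eqref{alpha}, so that it absorbs the cross term arising at the lower endpoint after an integration by parts. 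This is the standard mechanism: the definition $\alpha = \frac{p-1}{p}\sigma^{1/(1-p)}$ is precisely the value that makes the boundary contribution match the surface term produced by integrating the derivative of the weight down to $t=0$.

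More carefully, I would proceed by the "vector field" or "ground-state substitution" method rather than an explicit change of variables, since it handles the $p\neq 2$ case more cleanly. Set $F(t) = -C_p^{1/p}\,\mathrm{sgn}(u)\,|u(t)|^{p-1}(t+\alpha)^{-(p-1)}$ — up to constants the candidate for $|u'|^{p-2}u'$ in the equality case — and use the elementary pointwise inequality $|a|^p \geq |b|^{p/(p-1)\cdot(p-1)} \cdots$; more precisely the convexity inequality $|a|^p - |b|^p \geq p|b|^{p-2} b\,(a-b)$, valid for all real $a,b$ when $p\geq 2$ and with the appropriate form for $1<p<2$. Applying this with $a = u'(t)$ and $b$ chosen proportional to $u(t)(t+\alpha)^{-1}$, integrating over $[0,b]$, and integrating by parts the resulting linear term $\int_0^b p|b|^{p-2}b\,u'\,dt$, one produces (i) the main Hardy term $C_p\int_0^b (t+\alpha)^{-p}|u|^p\,dt$, (ii) a boundary term at $t=0$ which is bounded below by $-\sigma|u(0)|^p$ thanks to the choice \eqref{alpha}, (iii) a boundary term at $t=b$ which is nonnegative and, when combined with the interior remainder, yields the extra term $\frac{(p-1)C_p}{(b+\alpha)^p}\int_0^b |u|^p\,dt$. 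The term at $t=b$ is where the constant $(p-1)C_p/(b+\alpha)^p$ comes from: differentiating $(t+\alpha)^{-(p-1)}$ gives a factor $(p-1)$, evaluated against the full mass $\int_0^b |u|^p$.

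The main obstacle I anticipate is twofold: getting the endpoint bookkeeping exactly right (the $t=0$ boundary term must be dominated by $\sigma|u(0)|^p$ with the correct constant, which forces \eqref{alpha}, and the $t=b$ boundary term must be retained rather than discarded to obtain the sharp remainder), and handling the case $1<p<2$ where the convexity inequality $|a|^p\ge|b|^p+p|b|^{p-2}b(a-b)$ still holds but the remainder term in it is weaker, so one must be slightly more careful about which pieces survive. A clean way around the second point is to use the sharper inequality $|a|^p \ge |b|^p + p|b|^{p-2}b(a-b) + c_p\,|a-b|^p$ only when $p\ge 2$, and for $1<p<2$ to rely on the plain convexity estimate, checking that the extra term in \eqref{eq-1d} still emerges from the weight-derivative boundary term at $b$ alone (which it does, since that term does not require the pointwise remainder). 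I would also need to justify the integration by parts for merely absolutely continuous $u$, which is standard: $u\in AC[0,b]$ makes $|u|^{p-1}$ absolutely continuous as well (locally, away from zeros, and globally by an approximation argument), so all the boundary terms are well defined.
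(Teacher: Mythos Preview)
Your plan is sound for the main Hardy term and for the $t=0$ bookkeeping: the ground-state/vector-field computation with $b_{\mathrm{val}}=\frac{p-1}{p}\,\frac{u}{t+\alpha}$ and an integration by parts does indeed produce
\[
\int_0^b |u'|^p + \sigma|u(0)|^p \ \ge\ C_p \int_0^b \frac{|u|^p}{(t+\alpha)^p}\,dt \;+\; \Big(\frac{p-1}{p}\Big)^{p-1}\frac{|u(b)|^p}{(b+\alpha)^{p-1}},
\]
with the choice \eqref{alpha} making the lower boundary term match $\sigma|u(0)|^p$ exactly. The gap is in step (iii): the upper boundary term you obtain is the \emph{pointwise} quantity $|u(b)|^p/(b+\alpha)^{p-1}$, not the \emph{integral} remainder $\frac{(p-1)C_p}{(b+\alpha)^p}\int_0^b|u|^p\,dt$ claimed in \eqref{eq-1d}. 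Your justification (``differentiating $(t+\alpha)^{-(p-1)}$ gives a factor $(p-1)$, evaluated against the full mass $\int_0^b|u|^p$'') does not describe a correct mechanism; differentiating the weight produces $(p-1)\int_0^b|u|^p/(t+\alpha)^p$, which has already been absorbed into the Hardy term. Neither the $p\ge 2$ pointwise remainder $c_p|u'-b_{\mathrm{val}}|^p$ nor the bare boundary value at $b$ can be converted into the stated integral term without a further idea.

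The paper obtains the integral remainder by a genuinely different device: instead of testing against $f(t)=-(p-1)^{p-1}(t+\alpha)^{1-p}$ alone, it works with the \emph{shifted} quantity $f(b)-f(t)$. Setting $A=\big|\int_0^b f'\,u^p\,dt-(f(b)-f(0))u(0)^p\big|$ and $B=\int_0^b|f(b)-f(t)|^{p/(p-1)}u^p\,dt$, one gets $A^p\le p^p B^{p-1}\int_0^b|u'|^p$ by H\"older and $A^p\ge pAB^{p-1}-(p-1)B^p$ by Young. The integral remainder then drops out of the elementary bound $(1-s)^{p/(p-1)}\le 1-s^{p/(p-1)}$ applied to $B$, which separates $\int|f|^{p/(p-1)}u^p$ from $|f(b)|^{p/(p-1)}\int u^p$. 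It is precisely the subtraction of $f(b)$ that converts a pointwise endpoint contribution into the desired bulk term; your scheme, as written, cannot produce it.
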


\begin{proof}  It suffices to prove the inequality for $u>0$. We may assume that $\sigma>0$. 
Let 
$$
f(t) = -(p-1)^{p-1}\, (t+\alpha)^{1-p}
$$ 
and define
\begin{align*}
A & := \Big | \int_0^b f'(t)\, u^p\, dt -  (f(b)-f(0))\, u(0)^p\Big |, \qquad B := \int_0^b |f(b)-f(t)|^{\frac{p}{p-1}}\ u^p(t)\, dt.
\end{align*}
Integration by parts and H\"older inequality show that
\begin{align} \label{upper-1}
A^p & \leq  p^p \Big( \int_0^b |f(b)-f(t)|\, u^{p-1} |u'|\, dt\Big)^p \leq p^p\, B^{p-1}\  \int_0^b |u'|^p\,dt 
\end{align}
On the other hand, the Young inequality gives
\begin{equation} \label{young}
A^p  \geq p\, A\, B^{p-1} -(p-1)\, B^p. 
\end{equation} 
Using the fact that $f$ is negative increasing and that 
$$
(1-s)^{\frac{p}{p-1}} \leq 1-s \leq 1 -s^{\frac{p}{p-1}}  \qquad \forall\ s\in[0,1]
$$ 
we obtain 
$$
B = \int_0^b  |f(t)|^{\frac{p}{p-1}}\left(1 -\frac{|f(b)|}{|f(t)|}\right)^{\frac{p}{p-1}}\ u^p(t)\, dt \leq \int_0^b \Big( |f(t)|^{\frac{p}{p-1}} -|f(b)|^{\frac{p}{p-1}}\Big)\ u^p(t)\, dt.
$$
Moreover, since $u >0$, from the definition of $A$ we get
$$
A \geq  \int_0^b f'(t)\ u^p\, dt + f(0)\, u(0)^p. 
$$ 
The above inequalities in combination with \eqref{young} and \eqref{upper-1} then imply that 
\begin{align*} 
p^p \int_0^b |u'(t)|^p\, dt  - p f(0)\, u(0)^p \geq (p-1)^p\, \int_0^b \frac{u(t)^p}{(t+\alpha)^p} \, dt + (p-1)^{p+1}\, \int_0^b \frac{u(t)^p}{(b+\alpha)^p} \, dt.
\end{align*}
This implies \eqref{eq-1d}. 
\end{proof}

\noindent
If $\Gamma=\emptyset$, then $\sigma\in L^\infty(\partial\Omega)$ and it is easily seen that $W^{1,p}_{0,\Gamma}(\Omega)=W^{1,p}(\Omega)$. 
Mimicking the approach of \cite{kl} we deduce from  Lemma \ref{lem1} the following version of the 
Hardy inequality for Robin Laplacians on $W^{1,p}(\Omega)$.

\begin{proposition} \label{prop-hardy0}
Let $\Omega$ satisfy the hypothesis of Theorem \ref{thm-1}. 
Then for any $\sigma \in L^\infty(\pd\Omega)$ and all $u\in W^{1,p}(\Omega)$ it holds
\begin{align} \label{hardy-aux}
 \Q_p[\sigma, u] 
 & \geq \   C_p  \int_{\Omega} 
\frac{|u(x)|^p}{(\delta(x)+ \alpha(x))^p}\, dx +  (p-1)\, C_p \int_{\Omega}  \frac{|u(x)|^p}{(R_{in}+ \alpha(x))^p}\, dx.
\end{align}
\end{proposition}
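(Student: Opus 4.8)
The plan is to reduce the $n$-dimensional inequality to the one-dimensional estimate of Lemma~\ref{lem1} by integrating along normals to the boundary. Since $\partial\Omega$ is $C^2$ and $\Omega$ is convex and bounded, the map $(y,t)\mapsto y - t\,\nu(y)$, where $\nu(y)$ is the outward unit normal at $y\in\partial\Omega$ and $t\in[0,\delta^*(y)]$ with $\delta^*(y)$ the distance from $y$ along the inward normal to the singular set $S$, parametrizes $\Omega\setminus S$ (which has full measure). The coarea-type change of variables gives
\begin{equation} \label{coarea}
\int_\Omega g(x)\, dx = \int_{\partial\Omega} \int_0^{\delta^*(y)} g\big(y - t\,\nu(y)\big)\, J(y,t)\, dt\, d\nu(y),
\end{equation}
where $J(y,t) = \prod_{i=1}^{n-1}(1 - t\,\kappa_i(y))$ is the Jacobian, $\kappa_i(y)$ being the principal curvatures of $\partial\Omega$ at $y$. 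Convexity of $\Omega$ forces $\kappa_i(y)\ge 0$, hence $J(y,t)$ is nonincreasing in $t$ on $[0,\delta^*(y)]$ and $J(y,0)=1$; along each normal ray $\delta(y - t\nu(y)) = t$ and $\alpha(y - t\nu(y)) = \alpha(y) := \tfrac{p-1}{p}\sigma(y)^{1/(1-p)}$ is constant.

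Next I would apply Lemma~\ref{lem1} on each normal segment. Fix $y\in\partial\Omega$, set $b = \delta^*(y)$, $\alpha = \alpha(y)$, and let $v(t) = u(y - t\nu(y))$, which lies in $AC[0,b]$ for a.e.\ $y$ by the $C^1$/Sobolev regularity of $u$ (after the standard density reduction to $u\in C^1(\overline\Omega)$). Note $|v'(t)| = |\nabla u(y-t\nu(y))\cdot\nu(y)| \le |\nabla u(y - t\nu(y))|$ and $v(0) = u(y)$. Lemma~\ref{lem1} then yields, for each such $y$,
\begin{align} \label{pointwise-1d}
\int_0^{b} |\nabla u(y - t\nu(y))|^p\, dt + \sigma(y)\, |u(y)|^p
&\ \ge\ C_p \int_0^{b} \frac{|u(y-t\nu(y))|^p}{(t+\alpha)^p}\, dt \notag\\
&\qquad + \frac{(p-1)C_p}{(b+\alpha)^p}\int_0^{b} |u(y-t\nu(y))|^p\, dt.
\end{align}
The key point is then to insert the Jacobian weight $J(y,t)$ before integrating \eqref{pointwise-1d} over $\partial\Omega$. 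On the left this is harmless since we may bound the unweighted gradient integral from above using $J \le 1$... but that direction is wrong; instead one multiplies \eqref{pointwise-1d} through and must check each term reweights correctly. Because $J(y,t)$ is nonincreasing with $J(y,0)=1$, one has on the gradient term $\int_0^b |\nabla u|^p J\, dt \le \int_0^b |\nabla u|^p\, dt$, which is the wrong inequality direction; the correct argument is the one from \cite{kl}: rewrite Lemma~\ref{lem1} already \emph{with} the monotone weight, i.e.\ prove (or invoke) a weighted one-dimensional inequality
$$
\int_0^b |v'|^p\, J\, dt + \sigma |v(0)|^p \ \ge\ C_p\int_0^b \frac{|v|^p}{(t+\alpha)^p}\, J\, dt + (p-1)C_p\int_0^b \frac{|v|^p}{(b+\alpha)^p}\, J\, dt,
$$
valid for any nonnegative nonincreasing $J$ with $J(0)=1$ and $b+\alpha \le R_{in} + \alpha$ after noting $b = \delta^*(y) \le R_{in}$. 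I expect this weighted version follows from the same integration-by-parts/H\"older/Young scheme as in Lemma~\ref{lem1}, using monotonicity of $J$ exactly where monotonicity of $f$ was used; alternatively it is a direct corollary by an approximation of $J$ by step functions and iterating the plain Lemma on subintervals.

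Finally I would integrate the weighted one-dimensional inequality over $y\in\partial\Omega$ against $d\nu(y)$ and apply \eqref{coarea} termwise: the left side becomes exactly $\int_\Omega|\nabla u|^p\,dx + \int_{\partial\Omega}\sigma|u|^p\,d\nu = \Q_p[\sigma,u]$ (the boundary term needs $J(y,0)=1$, which holds); the first right-hand term becomes $C_p\int_\Omega |u|^p(\delta+\alpha)^{-p}\,dx$; and since $b = \delta^*(y)\le R_{in}$ gives $(b+\alpha(y))^{-p}\ge (R_{in}+\alpha(x))^{-p}$ along the corresponding ray (using $\alpha(y-t\nu(y))=\alpha(y)$), the last term is bounded below by $(p-1)C_p\int_\Omega |u|^p(R_{in}+\alpha(x))^{-p}\,dx$. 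This gives \eqref{hardy-aux}. The main obstacle is the careful justification of the change of variables \eqref{coarea} on $\Omega\setminus S$ together with the fact that $S$ is null and that $t\mapsto u(y-t\nu(y))$ is genuinely absolutely continuous for a.e.\ $y$ (handled by density of $C^1(\overline\Omega)$ in $W^{1,p}(\Omega)$ and Fubini), plus establishing the monotone-weight strengthening of Lemma~\ref{lem1}; both are exactly the points worked out in \cite{kl} for $p=2$, and the argument carries over with $C_p$ in place of $1/4$.
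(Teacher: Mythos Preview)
Your approach is correct but genuinely different from the paper's. The paper avoids the Jacobian weight altogether by approximating $\Omega$ from inside by convex polytopes $Q_m$: on each face of a polytope the inward normals are parallel, the change of variables $(y,t)\mapsto y+t\,n_j$ has Jacobian identically~$1$, and the \emph{plain} Lemma~\ref{lem1} applies directly on each normal segment. One then integrates over the faces, sums, and passes to the limit $Q_m\to\Omega$, handling $\sigma$ first by a Tietze extension to a continuous function on $\R^n$ and then by approximation in $L^\infty(\partial\Omega)$; the extension from $C^1(\overline\Omega)$ to $W^{1,p}(\Omega)$ is by density and trace compactness.

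You instead parametrize $\Omega\setminus S$ directly by the curved normal map and face the monotone Jacobian $J(y,t)=\prod_i(1-t\kappa_i(y))$. Your proposed remedy --- a weighted version of Lemma~\ref{lem1} obtained by writing a nonincreasing $J$ with $J(0)=1$ as a superposition of indicators $\chi_{[0,a]}$ and averaging the unweighted lemma over $a$ --- is sound: the boundary term $\sigma|v(0)|^p$ is independent of $a$, the first right-hand term reassembles exactly, and for the second one $(a+\alpha)^{-p}\ge(b+\alpha)^{-p}$ gives the desired lower bound after recombining. What you gain is that no polytope limit and no regularization of $\sigma$ are needed (for $\sigma\in L^\infty$ the one-dimensional lemma applies for a.e.\ $y$ and Fubini suffices); what the paper gains is that it never needs to prove a weighted one-dimensional inequality.

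One correction: your remark that ``both are exactly the points worked out in \cite{kl} for $p=2$'' is inaccurate. The reference \cite{kl} (and the present paper, which explicitly mimics it) uses the polytope approximation, not the curved-Jacobian route you describe; the monotone-weight strengthening of Lemma~\ref{lem1} is your own contribution here, not something you can simply cite.
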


\begin{proof}
As in \cite{kl} we first prove inequality \eqref{hardy-aux} for $u\in C^1(\overline{\Omega})$ and $\sigma$ continuous. By Tietze extension theorem then there exists a continuous function $\zeta: \R^n\to \R$ such that 
\begin{equation} \label{zeta}
\zeta \big |_{\pd\Omega} = \sigma. 
\end{equation}
Now let $Q\subset \Omega$ be an open convex polytop  with $N$ sides $\Gamma_j$, $1 \leq j \leq N$. Let $n_j$ be the inner normal vector of the side $\Gamma_j$. 

Let $\delta(x;Q)$ be the distance from $x \in Q$ to the boundary $\partial Q$ and let 
$$
P_j = \big\{ x\in Q\, : \, \exists\, y\in\Gamma_j , \ \delta(x; Q) = |x-y| \big\}.
$$
For each $x \in P_j$ there is a unique $y \in \Gamma_j$ and $t \in [0, t_y]$ for which
\begin{align}\label{x_and_y_notation}
x = y + t \, n_j,
\end{align}
where $t_y$ is chosen in such a way that $y + t_y \, n_j \in \partial P_j$. 
Moreover, we have
\begin{align}
R_{in}(Q) := \sup_{x \in Q} \delta(x;Q) = \max_{1 \leq j \leq N} \sup_{x \in P_j} \delta(x;Q).
\end{align}
Using Lemma \ref{lem1} and \eqref{x_and_y_notation} we get for each $y \in \Gamma_j$ the lower bound
\begin{align} 
\int_0^{t_y} |u'_{n_j}(x)|^p dt + \zeta(y)|u(y)|^p & \geq\  C_p \int_0^{t_y} \frac {|u(x)|^p}{(t + \alpha (x; Q))^p} dt  \label{1D-aux}\\
& \ \ + (p - 1) C_p \int_0^{t_y} \frac {|u(x)|^p}{(t_y + \alpha(x; Q))^p} dt, \nonumber
\end{align}
where 
\begin{equation} \label{alpha-2}
\alpha(x;Q) = \frac{p-1}{p}\, \zeta \big(\pi \big(x; Q\big)\big)^{\frac{1}{1-p}} 
\end{equation} 
and for $x$ in the interior of some $P_j$ we define $\pi(x;Q) = y \in \Gamma_j$, such that $\delta(x;Q) = |x - y|$. Note that $\pi(\cdot;Q)$ is densely defined in $Q$. 

By integrating \eqref{1D-aux} over the boundary $\Gamma_j$ and then summing the resulting inequality over $j=1,..,N$ we arrive at
\begin{align}
\int_{Q} |\nabla u|^p \, dx +  \int_{\pd Q} \zeta(y) |u(y)|^p \, d\nu (y) & \geq\  C_p \int_{Q} \frac {|u(x)|^p}{(\delta(x; Q) + \alpha (x; Q))^p} dx  \label{hardy-q} \\
& \ \  \ + (p - 1) \, C_p \int_{Q} \frac {|u(x)|^p}{(R_{in}(Q) + \alpha(x;Q))^p} dx . \nonumber 
\end{align}
From the convexity of $\Omega$ it follows that there exits a sequence of convex polytops  $Q_m\subset\Omega,\, m\in\N$, which approximates $\Omega$. More precisely, for every $\varepsilon$ there exists an $m_\eps$ such that the Hausdorf distance between $\Omega$ and $Q_{m_\eps}$ satisfies $d_H(\Omega, Q_{m_\eps}) < \varepsilon$. Similarly as in \cite{kl} we then conclude, using the continuity of $\zeta$ in combination with \eqref{zeta},  that 
$$
\zeta \big(\pi \big(x; Q_m \big)\big) \, \to  \, \sigma(\pi(x)) \quad m\to\infty,  \qquad \text{a. e.} \quad  x\in\Omega.
$$
Hence by the continuity of $u$ 
$$  
\int_{\partial Q_m}\! \zeta (y) \, |u(y)|^p\, d y\ \to \  \int_{\partial\Omega}\! \sigma(y) \, |u(y)|^p\, d\nu(y)
$$
as $m\to\infty$. The last two equations together with \eqref{alpha-2}, dominated convergence theorem and the fact that $R_{in}(Q_m) \leq R_{in}$ for every $m$ imply that 
\begin{align} \label{hardy-cont}
\Q[\sigma,u] & \geq  C_p \int_{\Omega} \frac {|u(x)|^p}{(\delta(x) + \alpha (x))^p} dx  + (p - 1) \, C_p \int_{\Omega} \frac {|u(x)|^p}{(R_{in} + \alpha(x))^p} dx , \quad u\in C^1(\overline{\Omega})
\end{align}
holds for all $\sigma$ continuous. 

\noindent Now if $\sigma\in L^\infty(\partial\Omega)$, then in view of the regularity of $\partial\Omega$ there exists a sequence of continuous functions $\sigma_k$ on $\partial\Omega$ which converges to $\sigma$ in $L^\infty(\partial\Omega)$  as $k\to\infty$. From inequality \eqref{hardy-cont} it follows that \eqref{hardy-aux} holds for  all $\sigma_k$.  
Since $u |_{\partial\Omega} \in L^p(\partial\Omega,d\nu)$ for any $u\in C^1(\overline{\Omega})$, using  the dominated convergence  we obtain \eqref{hardy-aux} for any $\sigma\in L^\infty(\partial\Omega)$ and all $u\in C^1(\overline{\Omega})$. 

Finally, let $u\in W^{1,p}(\Omega)$. By density there exists a sequence $u_j \in C^1(\overline{\Omega})$ such that $u_j \to u$ in  $W^{1,p}(\Omega)$ as $j\to \infty$. In view of the regularity of $\Omega$ it follows that $W^{1,p}(\Omega) \hookrightarrow L^p(\partial\Omega)$ with compact imbedding,  see \cite[Sect.7.5]{ad}. Hence, after applying inequality \eqref{hardy-cont} to $u_j$ and letting $j\to \infty$ we conclude that \eqref{hardy-aux} holds for all $u\in W^{1,p}(\Omega)$.  
\end{proof}

\begin{remark}
In the situation when $\sigma$ is constant, a simpler proof of \eqref{hardy-aux}, without the second term on the right hand side, can be given, see \cite[Lem. 4.4]{k} for the case $p=2$ and \cite[Lem. 3.1]{dpg} for the case $p>1$. 
\end{remark}

\noindent As an immediate consequence of the above Proposition we obtain

\begin{proposition}\label{prop-hardy} 
Let $\Omega$ satisfy the hypothesis of Theorem \ref{thm-1}. 
Then for any $\sigma \in \Sigma_\Gamma$ and all $u\in W_{0,\Gamma}^{1,p}(\Omega)$ it holds
\begin{align} \label{hardy-eq}
 \Q_p[\sigma, u] 
 & \geq \   C_p  \int_{\Omega} 
\frac{|u(x)|^p}{(\delta(x)+ \alpha(x))^p}\, dx +  (p-1)\, C_p \int_{\Omega}  \frac{|u(x)|^p}{(R_{in}+ \alpha(x))^p}\, dx.
\end{align}
\end{proposition}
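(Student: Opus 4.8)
The plan is to derive Proposition \ref{prop-hardy} from Proposition \ref{prop-hardy0} by reducing the Robin--Dirichlet case $\Gamma\neq\emptyset$ to the pure Robin case $\Gamma=\emptyset$ via an approximation argument on the boundary weight $\sigma$. The point is that inequality \eqref{hardy-eq} has exactly the same form as \eqref{hardy-aux}, and the only difference between the two statements is the function class: $W^{1,p}(\Omega)$ with $\sigma\in L^\infty(\partial\Omega)$ versus $W^{1,p}_{0,\Gamma}(\Omega)$ with $\sigma\in\Sigma_\Gamma$, i.e.\ $\sigma=+\infty$ on $\Gamma$. On $\Gamma$ we have $\sigma=+\infty$, hence by \eqref{alphax} also $\alpha=0$ on $\pi^{-1}(\Gamma)$, so the right-hand side of \eqref{hardy-eq} is unambiguous; the left-hand side, in turn, only sees $\sigma$ through $\int_{\partial\Omega}\sigma|u|^p\,d\nu$, and for $u\in W^{1,p}_{0,\Gamma}(\Omega)$ the trace of $u$ vanishes on $\Gamma$, so that boundary integral is effectively taken over $\partial\Omega\setminus\Gamma$ only.

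First I would fix $u\in W^{1,p}_{0,\Gamma}(\Omega)$ and, by the definition of $W^{1,p}_{0,\Gamma}(\Omega)$ as the closure of $\{v\in C^1(\overline\Omega): v|_\Gamma=0\}$, reduce to the case $u\in C^1(\overline\Omega)$ with $u|_\Gamma=0$; the passage to the limit at the end uses, just as in the proof of Proposition \ref{prop-hardy0}, the compact embedding $W^{1,p}(\Omega)\hookrightarrow L^p(\partial\Omega)$ for the boundary term together with Fatou/dominated convergence for the two weighted bulk terms on the right (the weights $(\delta+\alpha)^{-p}$ and $(R_{in}+\alpha)^{-p}$ are bounded on $\Omega$ when $\alpha$ is bounded away from $0$; on $\pi^{-1}(\Gamma)$ where $\alpha=0$ one needs the already-established Hardy inequality with Dirichlet data to control $\int|u|^p\delta^{-p}$, but since $u$ is continuous and vanishes on $\Gamma$ this is harmless).

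Second, for such a $u$, I would approximate $\sigma\in\Sigma_\Gamma$ by the truncations $\sigma_k:=\min\{\sigma,k\}\in L^\infty(\partial\Omega)$, which converge pointwise monotonically up to $\sigma$ on $\partial\Omega$. Applying Proposition \ref{prop-hardy0} with $\sigma_k$ gives
\begin{align*}
\int_\Omega|\nabla u|^p\,dx+\int_{\partial\Omega}\sigma_k|u|^p\,d\nu
&\geq C_p\int_\Omega\frac{|u|^p}{(\delta+\alpha_k)^p}\,dx
+(p-1)C_p\int_\Omega\frac{|u|^p}{(R_{in}+\alpha_k)^p}\,dx,
\end{align*}
where $\alpha_k(x)=\frac{p-1}{p}\sigma_k(\pi(x))^{1/(1-p)}$. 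On the left-hand side, since $u|_\Gamma=0$ and $\sigma_k\leq\sigma$ with $\sigma_k=k$ on $\Gamma$, monotone (or dominated, using $|u|^p\in L^1(\partial\Omega)$ and $\sigma_k|u|^p\to\sigma\mathbf 1_{\partial\Omega\setminus\Gamma}|u|^p\leq \|\sigma\|_{L^\infty(\partial\Omega\setminus\overline\Gamma)}|u|^p$ off $\Gamma$, while on $\Gamma$ the integrand is $0$) convergence gives $\int_{\partial\Omega}\sigma_k|u|^p\,d\nu\to\int_{\partial\Omega}\sigma|u|^p\,d\nu=\Q_p[\sigma,u]-\int_\Omega|\nabla u|^p\,dx$. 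On the right-hand side, $\sigma_k\nearrow\sigma$ forces $\alpha_k\searrow\alpha$ pointwise a.e.\ on $\Omega\setminus S$, so the weights $(\delta+\alpha_k)^{-p}$ and $(R_{in}+\alpha_k)^{-p}$ increase pointwise to $(\delta+\alpha)^{-p}$ and $(R_{in}+\alpha)^{-p}$; by the monotone convergence theorem the right-hand integrals converge to those in \eqref{hardy-eq}. Passing to the limit $k\to\infty$ yields \eqref{hardy-eq} for $u\in C^1(\overline\Omega)$ with $u|_\Gamma=0$, and then the density argument of the previous paragraph extends it to all $u\in W^{1,p}_{0,\Gamma}(\Omega)$.

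The main obstacle I anticipate is the interchange of limits on the right-hand side when $\alpha\equiv 0$ on part of $\Omega$ (namely on $\pi^{-1}(\Gamma)$): there the weight $(\delta+\alpha)^{-p}=\delta^{-p}$ is singular near $\Gamma$, so one must make sure the monotone convergence $(\delta+\alpha_k)^{-p}\nearrow\delta^{-p}$ produces a finite limit, i.e.\ that $\int_\Omega|u|^p\delta^{-p}\,dx<\infty$ for the test functions in play. For $u\in C^1(\overline\Omega)$ with $u|_\Gamma=0$ this finiteness follows from the classical Hardy inequality \eqref{hardy-general} (valid since $\Omega$ has $C^2$, hence Lipschitz, boundary) once the left-hand side of \eqref{hardy-eq} is seen to be finite; and the final density step is legitimate because convergence in $W^{1,p}_{0,\Gamma}(\Omega)$ together with \eqref{hardy-eq} itself (applied to differences $u_j-u_i$) shows the right-hand sides form a Cauchy sequence in the natural weighted $L^p$-norm, so no information is lost in the limit. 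Everything else is a routine repetition of the monotone/dominated convergence bookkeeping already carried out in the proof of Proposition \ref{prop-hardy0}.
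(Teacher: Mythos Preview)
Your proposal is correct and follows essentially the same route as the paper: truncate $\sigma$ to a bounded boundary weight, apply Proposition~\ref{prop-hardy0}, and pass to the limit via monotone convergence on the right-hand side. The paper's version is slightly leaner in two respects. First, it applies Proposition~\ref{prop-hardy0} directly to $u\in W^{1,p}_{0,\Gamma}(\Omega)\subset W^{1,p}(\Omega)$, so no preliminary reduction to $C^1(\overline{\Omega})$ functions (and no closing density argument) is needed. Second, because the trace of any $u\in W^{1,p}_{0,\Gamma}(\Omega)$ vanishes on $\Gamma$, the boundary integral $\int_{\partial\Omega}\sigma_k|u|^p\,d\nu$ is actually \emph{independent of $k$} once $k\geq\|\sigma\|_{L^\infty(\partial\Omega\setminus\overline{\Gamma})}$; hence $\Q_p[\sigma_k,u]=\Q_p[\sigma,u]$ for all large $k$, and only the right-hand side needs a limiting argument. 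Your concern about finiteness of $\int_\Omega |u|^p\delta^{-p}$ is not an obstruction: monotone convergence gives the inequality regardless, and if the right-hand side happens to be infinite the inequality is vacuously true.
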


\begin{proof}
Let $u\in W_{0,\Gamma}^{1,p}(\Omega)$ and define the sequence $\{\sigma_n\}_{n\in\N} \subset L^\infty(\pd\Omega)$ by 
$$
\sigma_n(y) = \left\{
\begin{array}{l@{\qquad}l}
 \tilde\sigma(y)    &       {\rm if \ } \  \  y\in\  \partial\Omega\setminus\Gamma ,  \\
n     &    {\rm if \ } \   \ y\in\Gamma.
\end{array}
\right. 
$$
Proposition \ref{prop-hardy0} now implies 
\begin{equation} \label{hn}
\Q[\sigma_n,u]  \geq  C_p \int_{\Omega} \frac {|u(x)|^p}{(\delta(x) + \alpha_n(x))^p}\, dx  + (p - 1) \, C_p \int_{\Omega} \frac {|u(x)|^p}{(R_{in} + \alpha_n(x))^p} \, dx , 
\end{equation}
where
$$
\alpha_n(x) = \frac{p-1}{p}\, \sigma_n \big(\pi(x)\big)^{\frac{1}{1-p}} , \qquad x\in\Omega.
$$
Since $\Q[\tilde \sigma,u] = \Q[\sigma_n,u]$ and   
$$
 \alpha_n(x) \leq  \alpha_{n+1}(x) \qquad \forall\ n\in\N, \quad x \in\Omega,
$$
the statement follows from \eqref{hn} by monotone convergence. 
\end{proof}

\noindent The following corollary of Proposition \ref{prop-hardy} provides yet another improvement of the Hardy inequality \eqref{hardy-general} with the sharp constant $K=C_p$. 

\begin{corollary} \label{cor-0}
For any $u\in W^{1,p}_0(\Omega)$ it holds
\begin{equation} \label{hardy-imp}
\int_\Omega |\nabla u(x)|^p \,dx \ \geq \   C_p  \int_{\Omega} \frac{|u(x)|^p}{\delta(x)^p}\, dx + \frac{(p-1)\,  C_p}{R_{in}^p} \  \|u\|_{L^p(\Omega)}^p. 
\end{equation}
\end{corollary}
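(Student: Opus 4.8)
The plan is to obtain \eqref{hardy-imp} by specializing Proposition \ref{prop-hardy} to the purely Dirichlet situation. Concretely, I would take $\Gamma=\partial\Omega$ and $\sigma\equiv+\infty$; this is admissible, since $+\infty\in\Sigma_{\partial\Omega}$, so Proposition \ref{prop-hardy} applies verbatim.

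With this choice I would first record three elementary observations. (i) The function $\alpha$ from \eqref{alphax} vanishes identically: since $p>1$ we have $\tfrac{1}{1-p}<0$, hence $\sigma(\pi(x))^{\frac{1}{1-p}}=(+\infty)^{\frac{1}{1-p}}=0$ for a.e.\ $x\in\Omega$, so $\alpha\equiv0$. (ii) The space $W^{1,p}_{0,\partial\Omega}(\Omega)$ coincides with $W^{1,p}_0(\Omega)$: for a domain with $C^2$ boundary the $W^{1,p}$-closure of $\{u\in C^1(\overline\Omega):u|_{\partial\Omega}=0\}$ is exactly $W^{1,p}_0(\Omega)$. (iii) For $u\in W^{1,p}_0(\Omega)$ the trace of $u$ on $\partial\Omega$ is zero, so the boundary integral in \eqref{form} contributes nothing and $\Q_p[\sigma,u]=\int_\Omega|\nabla u|^p\,dx$.

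Substituting $\alpha\equiv0$ and $\Q_p[\sigma,u]=\int_\Omega|\nabla u|^p\,dx$ into \eqref{hardy-eq} then gives
\[
\int_\Omega |\nabla u(x)|^p\,dx \ \geq\ C_p\int_\Omega\frac{|u(x)|^p}{\delta(x)^p}\,dx + (p-1)\,C_p\int_\Omega\frac{|u(x)|^p}{R_{in}^p}\,dx ,
\]
and since the last integral equals $R_{in}^{-p}\,\|u\|_{L^p(\Omega)}^p$, this is precisely \eqref{hardy-imp}.

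There is essentially no obstacle here: the only point deserving a word of care is the meaning of $\Q_p[\sigma,u]$ when $\sigma=+\infty$, and this is settled by observations (ii)--(iii) above. Alternatively, one can avoid mentioning $\sigma=+\infty$ at all and instead apply \eqref{hardy-aux} with the constant functions $\sigma_n\equiv n$ on $\partial\Omega$, for which $\alpha_n=\tfrac{p-1}{p}\,n^{\frac{1}{1-p}}\downarrow 0$, and then let $n\to\infty$ using monotone convergence on the right-hand side exactly as in the proof of Proposition \ref{prop-hardy}; for $u\in W^{1,p}_0(\Omega)$ the boundary terms $n\int_{\partial\Omega}|u|^p\,d\nu$ vanish for every $n$, so the left-hand side remains $\int_\Omega|\nabla u|^p\,dx$ throughout.
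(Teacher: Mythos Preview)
Your proposal is correct and follows exactly the paper's approach: the paper's proof consists of the single sentence ``It suffices to apply Proposition \ref{prop-hardy} with $\Gamma = \partial\Omega$,'' and you have simply spelled out the three evident consequences of that choice (namely $\alpha\equiv 0$, $W^{1,p}_{0,\partial\Omega}(\Omega)=W^{1,p}_0(\Omega)$, and the vanishing of the boundary term). Your alternative route via $\sigma_n\equiv n$ is also fine and in fact replicates the mechanism already used inside the proof of Proposition \ref{prop-hardy}.
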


\begin{proof}
It suffices to apply Proposition \ref{prop-hardy} with $\Gamma = \pd\Omega$. 
\end{proof}

\medskip

\section{\bf Proofs of the main results}
\label{sec-proofs}

\noindent We start with the following Proposition which provides sufficient conditions for the existence of a minimizer of the variational problem \eqref{var-prob}.

\begin{proposition} \label{prop-min}
Let $\Omega\subset\R^n$ be open and bounded with $\partial\Omega$ of class $C^2$.
Assume that $\sigma \in L^\infty (\partial \Omega)$. Then \eqref{var-prob} admits a minimiser. In other words, there exists $\psi\in W^{1,p}(\Omega), \ \psi \neq 0,$ such that
\begin{equation} \label{psi-min}
\lambda_p(\Omega, \sigma) = \frac{\Q_{p}[\sigma, \psi]}{\|\psi\|_{p,\sigma}^p }\ . 
\end{equation}
\end{proposition}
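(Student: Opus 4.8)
The plan is to prove existence of a minimiser by the direct method of the calculus of variations, exploiting the compact Sobolev trace embedding $W^{1,p}(\Omega)\hookrightarrow L^p(\partial\Omega,d\nu)$ that was already invoked in the proof of Proposition~\ref{prop-hardy0}. The first obstacle to confront is that the denominator $\|u\|_{p,\sigma}^p=\int_\Omega |u|^p(\delta+\alpha)^{-p}\,dx$ involves the singular weight $(\delta(x)+\alpha(x))^{-p}$, which may blow up near $\partial\Omega$ on the set where $\sigma$ (hence $\alpha$) vanishes; I must first check that this functional is finite and well-behaved on $W^{1,p}(\Omega)$. Since $\sigma\in L^\infty(\partial\Omega)$, the set $\Gamma$ where $\sigma=+\infty$ has measure zero, but $\alpha$ can still be zero on a positive-measure subset of $\partial\Omega$, so the weight is genuinely singular like $\delta^{-p}$ there. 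However, Corollary~\ref{cor-0} (equivalently Proposition~\ref{prop-hardy} with $\Gamma=\partial\Omega$, or rather Proposition~\ref{prop-hardy0} directly) shows $\|u\|_{p,\sigma}^p \le C_p^{-1}\,\Q_p[\sigma,u]$, so on the natural form domain the Rayleigh quotient is bounded below by $C_p>0$ and $\|u\|_{p,\sigma}$ is controlled by the $W^{1,p}$ energy; this simultaneously gives $\lambda_p(\Omega,\sigma)\ge C_p>0$ and coercivity.

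Next I would run the standard argument. Take a minimising sequence $u_j\in W^{1,p}(\Omega)$, normalised so that $\|u_j\|_{p,\sigma}=1$ and $\Q_p[\sigma,u_j]\to\lambda_p(\Omega,\sigma)$. From $\Q_p[\sigma,u_j]$ bounded and $\sigma\ge0$ we get $\|\nabla u_j\|_{L^p(\Omega)}$ bounded. To bound $\|u_j\|_{L^p(\Omega)}$ I would use the second (extra) term in the Hardy inequality~\eqref{hardy-aux}: since $R_{in}+\alpha(x)\le R_{in}+\|\alpha\|_\infty=:c_0<\infty$, that term dominates $(p-1)C_p c_0^{-p}\|u_j\|_{L^p(\Omega)}^p$, so $\|u_j\|_{L^p(\Omega)}$ is bounded too, hence $\{u_j\}$ is bounded in $W^{1,p}(\Omega)$. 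By reflexivity (recall $p>1$) pass to a subsequence with $u_j\rightharpoonup\psi$ weakly in $W^{1,p}(\Omega)$, $u_j\to\psi$ strongly in $L^p(\Omega)$ (Rellich) and, crucially, $u_j|_{\partial\Omega}\to\psi|_{\partial\Omega}$ strongly in $L^p(\partial\Omega,d\nu)$ by the compact trace embedding. Then $\int_{\partial\Omega}\sigma|u_j|^p\,d\nu\to\int_{\partial\Omega}\sigma|\psi|^p\,d\nu$ since $\sigma\in L^\infty$, and $\liminf_j\int_\Omega|\nabla u_j|^p\,dx\ge\int_\Omega|\nabla\psi|^p\,dx$ by weak lower semicontinuity of the $L^p$ norm (convexity), so $\Q_p[\sigma,\psi]\le\lambda_p(\Omega,\sigma)$.

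For the denominator I would argue that $\|u_j\|_{p,\sigma}\to\|\psi\|_{p,\sigma}$, which forces $\|\psi\|_{p,\sigma}=1$ and in particular $\psi\neq0$; combined with $\Q_p[\sigma,\psi]\le\lambda_p(\Omega,\sigma)\|\psi\|_{p,\sigma}^p$ and the definition of $\lambda_p$ as an infimum, this yields equality~\eqref{psi-min}. The convergence $\|u_j\|_{p,\sigma}\to\|\psi\|_{p,\sigma}$ is the one genuinely delicate point because of the singular weight: I expect to handle it by splitting $\Omega$ into a boundary strip $\Omega_\eta=\{x:\delta(x)<\eta\}$ and its complement. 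On $\Omega\setminus\Omega_\eta$ the weight $(\delta+\alpha)^{-p}$ is bounded, so strong $L^p(\Omega)$ convergence of $u_j$ gives convergence of the integral there; on the thin strip $\Omega_\eta$, I would apply the Hardy inequality~\eqref{hardy-aux} localised to $\Omega_\eta$ (or just use that $\int_{\Omega_\eta}|u_j|^p(\delta+\alpha)^{-p}\,dx \le C_p^{-1}\Q_p[\sigma, u_j \chi]$ for a suitable cutoff, or more cleanly bound the tail by $C_p^{-1}$ times the part of the energy concentrated near the boundary) to show the strip contribution is uniformly small as $\eta\to0$ — this uses that a uniformly $W^{1,p}$-bounded sequence has uniformly small Hardy-weighted mass near $\partial\Omega$. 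Letting $\eta\to0$ after $j\to\infty$ closes the argument. The main obstacle, then, is precisely this uniform smallness of the singular-weight integral near the boundary; everything else is the textbook direct method. Alternatively, since $(\delta+\alpha)^{-p}$ is a fixed nonnegative weight, one may simply apply Fatou's lemma to get $\|\psi\|_{p,\sigma}\le\liminf\|u_j\|_{p,\sigma}=1$ for the lower semicontinuity direction needed to conclude $\psi$ is a minimiser — in fact, combining $\Q_p[\sigma,\psi]\le\lambda_p$ with $\lambda_p\|\psi\|_{p,\sigma}^p\le\Q_p[\sigma,\psi]$ gives $\lambda_p\|\psi\|_{p,\sigma}^p\le\lambda_p$, so $\|\psi\|_{p,\sigma}\le1$, while the reverse $\|\psi\|_{p,\sigma}\ge1$ (hence $\psi\neq0$) still requires ruling out loss of mass near $\partial\Omega$, which is the step to dwell on.
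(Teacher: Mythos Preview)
Your overall architecture (direct method, weak $W^{1,p}$-compactness, compact trace embedding, weak lower semicontinuity of the gradient term) is exactly the paper's. But you have misread the relation between $\sigma$ and $\alpha$, and this leads you to fight a nonexistent obstacle.

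Recall $\alpha(x)=\tfrac{p-1}{p}\,\sigma(\pi(x))^{1/(1-p)}$ with $1/(1-p)<0$. Thus $\alpha$ is a \emph{decreasing} function of $\sigma$: $\alpha=0$ corresponds to $\sigma=+\infty$, while $\sigma$ small makes $\alpha$ large. The hypothesis $\sigma\in L^\infty(\partial\Omega)$ therefore forces
\[
\alpha(x)\ \ge\ \tfrac{p-1}{p}\,\|\sigma\|_{L^\infty(\partial\Omega)}^{1/(1-p)}\ >\ 0
\quad\text{for a.e.\ }x\in\Omega,
\]
so the weight is \emph{bounded}, not singular:
\[
\big\|(\delta+\alpha)^{-p}\big\|_{L^\infty(\Omega)}=\Big(\tfrac{p}{p-1}\Big)^p\|\sigma\|_{L^\infty(\partial\Omega)}^{p/(p-1)}<\infty .
\]
Your sentence ``$\alpha$ can still be zero on a positive-measure subset of $\partial\Omega$, so the weight is genuinely singular like $\delta^{-p}$ there'' is simply false under the standing assumption $\sigma\in L^\infty$.

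This is precisely the observation the paper exploits: once the weight is in $L^\infty(\Omega)$, strong convergence $u_j\to\psi$ in $L^p(\Omega)$ immediately gives $\|\psi\|_{p,\sigma}=1$, hence $\psi\neq 0$, and the proof closes in one line. There is no ``loss of mass near $\partial\Omega$'' to rule out, no boundary-strip decomposition, no localised Hardy estimate needed. (Incidentally, your bound $R_{in}+\alpha(x)\le R_{in}+\|\alpha\|_\infty$ has the same sign confusion: $\alpha$ need not be bounded \emph{above} when $\sigma$ is merely in $L^\infty$, since $\sigma$ may vanish.) Once you correct the monotonicity of $\sigma\mapsto\alpha$, your outline collapses to the paper's short argument.
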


\begin{proof}
Let $\{u_j\}_{j\in\N}$ be a minimising sequence for $\lambda(\Omega,\sigma)$. Assume that 
\begin{equation} \label{normalised}
\|u_j\|_{p,\sigma}^p = \int_{\Omega} (\delta(x)+ \alpha(x))^{-p} \ |u_j(x)|^p\, dx=1 \quad \forall\ j\in\N. 
\end{equation}
Since $\{u_j\}$ is bounded in $W^{1,p}(\Omega)$, there exists a subsequence, which we still denote by $u_j$ and a function $\psi\in W^{1,p}(\Omega)$ such that $u_j \to \psi$ weakly in $W^{1,p}(\Omega)$. 
In view of the regularity of $\Omega$ and the compactness of the imbedding  $W^{1,p}(\Omega) \hookrightarrow L^p(\Omega)$ we may suppose (by passing to a subsequence if necessary) that $u_j$ converges strongly to $\psi$ in $L^p(\Omega)$. Moreover, since $W^{1,p}(\Omega)$ is compactly imbedded also in $L^p(\partial\Omega)$, see e.g. \cite[Thm.5.22]{ad}, it follows that we can find a subsequence $\{v_j\} \subset \{u_j\}$ such that 
$v_j |_{\partial\Omega} \to \psi|_{\partial\Omega}$ almost everywhere on $\partial\Omega$. By the weak lower semicontinuity of $\int_\Omega |\nabla u|^p$ and  the Fatou Lemma we thus obtain 
\begin{equation} \label{semi-cont}
\liminf_{j\to\infty} \Q_p[\sigma, v_j] \geq \Q_p[\sigma, \psi].
\end{equation}
On the other hand, 
$$
\| (\delta+ \alpha)^{-p}\|_{L^\infty(\Omega)} = \Big(\frac{p}{p-1}\Big)^p\ \|\sigma\|_{L^\infty(\partial\Omega)}^{\frac{p}{p-1}} \ < \ \infty,
$$
see \eqref{alphax}.
The strong convergence of $v_j  \to \psi$ in $L^p(\Omega)$ thus implies that
$$
\|\psi\|_{p,\sigma}^p= \int_{\Omega} (\delta(x)+ \alpha(x))^{-p} \ |\psi(x)|^p\, dx=1 .
$$
Hence $\psi\neq 0$ and in view of \eqref{semi-cont} we have
$$
\Q_{p}[\sigma, \psi] \geq \lambda(\Omega, \sigma) = \liminf_{j\to \infty} \Q_p[\sigma, v_j] \geq \Q_{p}[\sigma, \psi].
$$
This implies \eqref{psi-min}.
\end{proof}

\begin{proof}[\bf Proof of Theorem \ref{cor-1}]
Let $\psi$ be a minimiser for $\lambda(\Omega, \sigma)$ whose existence is guaranteed by Proposition \ref{prop-min}. 
By \eqref{hardy-eq}  we have 
\begin{align*}
\lambda_p(\Omega, \sigma) & =\frac{\Q_{p}[\sigma, \psi]}{\|\psi\|_{p,\sigma}^p}
  \geq C_p\, \left(1 +(p-1)\,  \min_{x\in\Omega} \left(\frac{\delta(x)+\alpha(x)}{R_{in} + \alpha(x)} \right)^p\, \right). 
\end{align*}
Since 
$$
 \frac{\delta(x)+\alpha(x)}{R_{in} + \alpha(x)} \ \geq\ \frac{\min_{x\in\Omega} \alpha(x)}{R_{in}+\min_{x\in\Omega} \alpha(x)} \qquad \forall\ x\in\Omega,
$$
the lower bound \eqref{non-sharp} follows from \eqref{alphax}. 
\end{proof}

\noindent In order to give a proof of Theorem \ref{thm-1} we need the following

\begin{proposition} \label{prop-sharp}
Let $\Omega\subset\R^n$ be open and bounded with $\partial\Omega$ of class $C^2$.
If $\Gamma\neq\emptyset$, then $\lambda_p(\Omega, \sigma) \leq C_p$.
\end{proposition}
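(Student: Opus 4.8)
The plan is to establish $\lambda_p(\Omega,\sigma)\le C_p$ by exhibiting, for each $\tau>0$, a test function $u\in W^{1,p}_{0,\Gamma}(\Omega)$ whose Rayleigh quotient in \eqref{var-prob} is at most $C_p+\tau$. The function $u$ will be concentrated near a well-chosen point $y_0\in\Gamma$ and will carry, in the direction normal to $\partial\Omega$, the standard near-extremal profile $t\mapsto t^{(p-1)/p+\eta}$ for the one-dimensional Hardy inequality, multiplied by a cut-off in the tangential variable. The structural reason this works is that $\sigma=+\infty$ on $\Gamma$, so by \eqref{alphax} the function $\alpha$ vanishes at every $x$ whose nearest boundary point lies in $\Gamma$; hence on essentially all of the support of $u$ the weight $(\delta+\alpha)^{-p}$ is just the Dirichlet weight $\delta^{-p}$, for which $C_p$ is the sharp one-dimensional constant. (Here, as in Theorem \ref{thm-1}, the operative reading of the hypothesis is that $\Gamma$ has positive surface measure; this is the one place it will enter.)

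First I would set up boundary normal coordinates. Since $\partial\Omega$ is $C^2$ there is $\eps_0>0$, smaller than the reach of $\partial\Omega$, such that $\Phi(y,t)=y+t\,n(y)$, with $n(y)$ the inner unit normal, is a $C^1$ diffeomorphism of $\partial\Omega\times(0,\eps_0)$ onto $\{x\in\Omega:\delta(x)<\eps_0\}$, with Jacobian $1+O(t)$ and $\delta(\Phi(y,t))=t$, $\pi(\Phi(y,t))=y$; consequently $\alpha(\Phi(y,t))$ depends on $y$ only, equalling $0$ for $y\in\Gamma$ and being $\ge\tfrac{p-1}{p}\|\sigma\|_{L^\infty(\partial\Omega\setminus\overline{\Gamma})}^{1/(1-p)}>0$ for $y\notin\overline{\Gamma}$. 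I pick $y_0\in\Gamma$ to be a point of density one of $\Gamma$ with respect to the surface measure $\nu$, so that $\nu\big((\partial\Omega\setminus\Gamma)\cap B_r(y_0)\big)=o(r^{n-1})$ as $r\to0$. Then, with a fixed profile $\psi\in C^1_c([0,1))$, $\psi\equiv1$ near $0$, $0\le\psi\le1$, and small $\eta>0$, I set $\phi_r(y)=\psi(|y-y_0|/r)$, let $g_\eta(t)=t^{(p-1)/p+\eta}$ on $[0,\eps_0/2]$, continued to $0$ on $[\eps_0/2,\eps_0]$ in a fixed Lipschitz way and by $0$ beyond, and define
\[
u=u_{\eta,r}:=g_\eta(\delta(\cdot))\,\phi_r(\pi(\cdot)).
\]
Because $g_\eta(0)=0$ and $g_\eta\in W^{1,p}_\loc$ (this needs $\eta>0$) and $\pi$ is $C^1$ on the tubular neighbourhood, $u\in W^{1,p}_0(\Omega)\subset W^{1,p}_{0,\Gamma}(\Omega)$ and $\Q_p[\sigma,u]=\int_\Omega|\nabla u|^p\,dx$.

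The estimate then proceeds from the one-dimensional identities
\[
\int_0^{\eps_0/2}|g_\eta'(t)|^p\,dt=\Big(\tfrac{p-1}{p}+\eta\Big)^p\,\frac{(\eps_0/2)^{p\eta}}{p\eta},\qquad \int_0^{\eps_0/2}\frac{g_\eta(t)^p}{t^p}\,dt=\frac{(\eps_0/2)^{p\eta}}{p\eta},
\]
whose integrands are the same power $t^{-1+p\eta}$, while $\int_0^{\eps_0}g_\eta(t)^p\,dt$ and the contributions of $[\eps_0/2,\eps_0]$ stay bounded as $\eta\to0^+$. Writing the integrals over $\Omega$ in the coordinates $(y,t)$, using $|\nabla u|\le|g_\eta'(\delta)|\,\phi_r(\pi)+g_\eta(\delta)\,|\nabla\phi_r(\pi)|(1+O(t))$ and $(a+b)^p\le(1+\theta)a^p+C_\theta b^p$, the numerator is bounded by $(1+\theta)\big(\tfrac{p-1}{p}+\eta\big)^p\frac{(\eps_0/2)^{p\eta}}{p\eta}\int_{\partial\Omega}\phi_r^p\,d\nu$ plus terms (from the transverse gradient and from $[\eps_0/2,\eps_0]$) that remain bounded as $\eta\to0^+$ for fixed $r$. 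For the denominator I keep only the contribution of $\{x:\pi(x)\in\Gamma\}$, where $\alpha=0$, obtaining at least $\big(\tfrac{(\eps_0/2)^{p\eta}}{p\eta}-O(1)\big)\int_\Gamma\phi_r^p\,d\nu$. Dividing and letting $\eta\to0^+$ first (the bounded remainders are negligible against $1/(p\eta)$, and $\big(\tfrac{p-1}{p}+\eta\big)^p\to C_p$) gives
\[
\limsup_{\eta\to0^+}\ \frac{\Q_p[\sigma,u_{\eta,r}]}{\|u_{\eta,r}\|_{p,\sigma}^p}\ \le\ (1+\theta)\,C_p\,\frac{\int_{\partial\Omega}\phi_r^p\,d\nu}{\int_\Gamma\phi_r^p\,d\nu}.
\]
Finally, by the density property together with $\nu(B_\rho(y_0)\cap\partial\Omega)\sim c\,\rho^{n-1}$ we have $\int_\Gamma\phi_r^p\,d\nu=\int_{\partial\Omega}\phi_r^p\,d\nu-o(r^{n-1})=(1+o(1))\int_{\partial\Omega}\phi_r^p\,d\nu$ as $r\to0$; letting $r\to0$ and then $\theta\to0$ yields $\lambda_p(\Omega,\sigma)\le C_p$.

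The hard part will be the bookkeeping of the two independent length scales — the tangential width $r$, which must be sent to $0$ so that essentially all the mass of $u$ sits over $\Gamma$ (where the Robin correction $\alpha$ vanishes), and the normal concentration governed by $\eta\to0^+$, which produces the sharp constant — and in particular choosing the order of limits ($\eta$ last) so that the transverse-gradient error and the error from truncating $g_\eta$ are of lower order. The only genuinely nonelementary input is that a set of positive surface measure has density-one points, which is exactly what sits behind the equivalence \eqref{equiv}. (One could instead localize at $y_0$, blow up, and quote the sharpness of $C_p$ for a half-space, but the direct computation above is self-contained modulo the profile appearing in Lemma \ref{lem1}.)
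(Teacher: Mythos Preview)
Your argument is correct and rests on the same idea as the paper's proof: insert a test function carrying the near-extremal one-dimensional profile $\delta^{(p-1)/p+\eta}$ in the normal direction, localized tangentially near a point of $\Gamma$ so that $\alpha$ vanishes on essentially all of the support and the weight reduces to $\delta^{-p}$.

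The implementations differ in two respects worth recording. The paper tacitly works under the assumption that $\Gamma$ contains a relative ball $B(y_0,r)\cap\partial\Omega$, so that $\alpha\equiv0$ on a fixed tubular neighbourhood; it then uses a \emph{single} small parameter $\eps$ and the test function $u_\eps=\delta^{\,f_\eps(x)+1-1/p}$ with a variable exponent interpolating between $(p-1)/p+\eps$ inside $B(y_0,r)$ and $1$ outside, truncated in $\delta$ at level $2\eps$. You instead choose $y_0$ to be a point of density one of $\Gamma$ and decouple the two scales, using the tensor-product function $g_\eta(\delta)\,\phi_r(\pi)$. Your construction is slightly simpler to differentiate, and the density-point device makes the argument go through under the weaker hypothesis $\nu(\Gamma)>0$ (precisely the reading needed for \eqref{equiv}); the cost is one extra limit in the bookkeeping. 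One slip in your write-up: the closing paragraph says ``$\eta$ last'', but your actual computation (correctly) sends $\eta\to0^+$ first with $r,\theta$ fixed, then $r\to0$, then $\theta\to0$.
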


\begin{proof}
By assumption there exists $y_0\in \Gamma$ and an $r>0$ such that $\alpha(x) =0$ on $B(y_0,r)\cap \Omega$. 
Let $\eps>0$ and introduce the following continuous functions
$$
f_{\eps} (x) = \left\{
\begin{array}{l@{\qquad}l}
\eps                   & {\rm if \ } |x-y_0| \leq r, \\
\text{linear in} \ |x| & {\rm if \ } r \leq |x-y_0| \leq r + \eps, \\
\frac 1p               & {\rm if \ } r + \eps \leq |x-y_0|,
\end{array}
\right. 
$$
and
\begin{equation} \label{u-eps}
u_{\eps} (x) = \left\{
\begin{array}{l@{\qquad}l}
\delta (x)^{f_\eps (x) + 1 - \frac 1p} & {\rm if \ } 0 \leq \delta(x)\leq \eps, \\
\text{linear in} \ \delta(x)           & {\rm if \ } \eps \leq \delta(x) \leq 2 \eps, \\
0                                      & {\rm if \ } 2\eps \leq \delta(x).
\end{array}
\right. 
\end{equation}

\begin{figure}[h] \label{fig1}
\begin{center}
\begin{pspicture}(-5,-1)(5,5)
	\psarc[fillstyle=solid, fillcolor=gray](0,0){3}{0}{180}
	\psarc[fillstyle=solid, fillcolor=white, linecolor=white](0,1){3}{0}{180}
	\psline(-5,0)(5,0)
	\put(5.3, -0.1){$\partial \Omega$}
	\psarc(0,0){3}{0}{180}
	\psarc(0,0){4}{0}{180}
	\psline(-5,1)(5,1)
	\put(5.3, 0.9){$\varepsilon$}
	\psline(-5,2)(5,2)
	\put(5.3, 1.9){$2 \varepsilon$}
	\pscircle*(0, 0){2pt}
	\put(-0.15,-0.5){$y_0$}
	\psline{->}(0,0)(1,2.85)
	\psline{->}(0,0)(-1,3.85)
	\put(1.1,3){$r$}
	\put(-1.3,4.1){$r + \varepsilon$}
\end{pspicture}
\end{center}
\caption{}
\end{figure}

\noindent  To proceed we introduce the following notation: 
$$
\Omega_\eps : = \{ x\in\Omega\, : \, \delta(x) \leq \eps\}, \quad E(\eps, r) := B(y_0,r)\cap \Omega_\eps, \quad D(y_0, r) :=  B(y_0,r)\cap \partial\Omega.
$$
Notice that $E(\eps,r)$ is the set in Figure 1 marked in grey. 
By \cite[Sec. I.3]{se} there exists a set of coordinates $(\delta, \omega)\in\R^n$ such that the transformation $x \to (\delta(x), \omega(x))$ is $C^1$ on $\Omega_\eps$ for $\eps$ sufficiently small. Moreover,  the Jacobian $J(\delta,\omega)$ of this transformation satisfies 
\begin{equation} \label{jacob}
\lim_{\delta\to 0} J(\delta,\omega) = 1.
\end{equation}
From \eqref{u-eps} and \eqref{jacob} we obtain
\begin{align}
\eps \int_{E(\eps,r)} |\nabla u_\eps(x)|^p\, dx &= \left(\eps+1 -\frac 1p\right)^p\, \eps  \int_{E(\eps,r)} \delta(x)^{\, p\eps -1}\, dx \nonumber \\
& = \left(\eps+1 -\frac 1p\right)^p\, \eps \int_{D(y_0,r)}  \int_0^\eps \delta^{\, p\eps-1}\, J(\delta,\omega)\, d\delta\, d\omega \label{grad} \\
& =  \frac{C_p}{p}\ \nu(D(y_0, r))\, (1 + o_\eps(1)) , \nonumber
\end{align}
where $o_\eps(1)$ denotes a quantity which tends to zero as $\eps\to 0$. Similarly we find for $\eps\to 0$
\begin{align}
\eps \int_{E(\eps,r)} \frac{ |u_{\eps}(x)|^p}{(\delta(x)+ \alpha(x))^{p}} \, dx   &= \eps \int_{E(\eps,r)}\frac{ |u_{\eps}(x)|^p}{\delta(x)^{p}} \, dx = \eps \int_{E(\eps,r)} \delta(x)^{\, p\eps -1}\, dx \nonumber \\
 & =  \frac{1}{p}\ \nu(D(y_0, r))\, (1 + o_\eps(1)), \label{weighted}
\end{align}
On the other hand, using the fact that $|\nabla f_\eps| \leq C/\eps$ for some $C>0$ in combination with \eqref{jacob} it is straightforward to verify that 
$$
\lim_{\eps\to 0}\,  \eps \int_{\Omega\setminus E(\eps,r)} |\nabla u_\eps(x)|^p\, dx\,  =\,  \lim_{\eps\to 0} \, \eps \int_{\Omega\setminus E(\eps,r)}  \frac{ |u_{\eps}(x)|^p}{(\delta(x)+ \alpha(x))^{p}} \, dx = 0.
$$
Hence by collecting the above results we arrive at 
$$
\lambda_p(\Omega, \sigma) \leq \lim_{\eps \to 0} \frac{\Q_{p}[\sigma, u_\eps]}{\|u_\eps\|_{p,\sigma}^p } = C_p,
$$
and the claim follows.
\end{proof}

\begin{proof}[\bf Proof of Theorem \ref{thm-1}]
The inequality $\lambda_p(\Omega,\sigma) \geq C_p$ follows from Proposition \ref{prop-hardy}. The equivalence \eqref{equiv} follows from Theorem \ref{cor-1} and Proposition \ref{prop-sharp}.
\end{proof}

\subsection{The case of constant $\sigma$}
\label{sec-const}

\noindent Here we provide a more detailed information about the quantity $\lambda_p(\Omega,\sigma)$ in the case when $\sigma$ is a positive constant. 

\begin{proposition} 
Let $\Omega\subset\R^n$ be convex and bounded. Then
\begin{align}
\lim_{\sigma\to 0+} \lambda_p(\Omega, \sigma)  & =+ \infty, \label{lim-N} \\
\lim_{\sigma\to \infty} \lambda_p(\Omega, \sigma)  & = C_p.  \label{lim-D} \\
\inf_{\Omega \ \text{convex}} \lambda_p(\Omega, \sigma) & = C_p. \label{inf}
\end{align}
\end{proposition}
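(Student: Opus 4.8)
The plan is to prove the three identities separately, with one common ingredient: the lower bound $\lambda_p(\Omega,\sigma)\ge C_p$, valid for every bounded convex $\Omega$. When $\partial\Omega$ is of class $C^2$ this is Proposition~\ref{prop-hardy} applied with $\Gamma=\emptyset$, and for a general bounded convex $\Omega$ the same proof applies when $\sigma$ is continuous, since it only uses the convex polytope inequality from the proof of Proposition~\ref{prop-hardy0} together with the trace embedding on the Lipschitz domain $\Omega$. Throughout, for a positive constant $\sigma$ the function $\alpha$ of~\eqref{alphax} is the constant
$$
\alpha=\alpha(\sigma)=\frac{p-1}{p}\,\sigma^{\frac{1}{1-p}},
$$
so $\alpha(\sigma)\to+\infty$ as $\sigma\to 0+$, $\alpha(\sigma)\to 0$ as $\sigma\to\infty$, and one has the elementary identities $\alpha(\sigma)^p=C_p\,\sigma^{-p/(p-1)}$ and $\sigma\,\alpha(\sigma)^{p-1}=\big(\frac{p-1}{p}\big)^{p-1}$. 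Hence \eqref{lim-D} and \eqref{inf} reduce to producing matching upper bounds, while \eqref{lim-N} requires a quantitative lower bound diverging to $+\infty$.

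For \eqref{lim-N} I would estimate the weight crudely by $\|u\|_{p,\sigma}^p\le \alpha(\sigma)^{-p}\,\|u\|_{L^p(\Omega)}^p$ (since $\delta\ge0$), which gives
$$
\lambda_p(\Omega,\sigma)\ \ge\ \alpha(\sigma)^p\,\mu_1(\sigma),\qquad \mu_1(\sigma):=\inf_{0\ne u\in W^{1,p}(\Omega)}\frac{\int_\Omega|\nabla u|^p\,dx+\sigma\int_{\partial\Omega}|u|^p\,d\nu}{\|u\|_{L^p(\Omega)}^p},
$$
i.e.\ $\lambda_p(\Omega,\sigma)$ is bounded below by $\alpha(\sigma)^p$ times the first Robin $p$-eigenvalue. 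The decisive point is that $\mu_1(\sigma)\ge c_\Omega\,\sigma$ for all small $\sigma>0$, with $c_\Omega>0$ depending only on $\Omega$. This I would get by a dichotomy on a test function normalized by $\|u\|_{L^p(\Omega)}=1$: either $\int_\Omega|\nabla u|^p\,dx\ge\sigma$, and there is nothing to prove, or $\int_\Omega|\nabla u|^p\,dx<\sigma$, in which case the Poincar\'e inequality shows that $u$ is $O(\sigma^{1/p})$-close in $L^p(\Omega)$ to its mean $u_\Omega$, forcing $|u_\Omega|$ to be bounded below for $\sigma$ small; then the trace inequality $W^{1,p}(\Omega)\hookrightarrow L^p(\partial\Omega)$ on the Lipschitz domain $\Omega$ yields $\int_{\partial\Omega}|u|^p\,d\nu\ge c_\Omega'>0$, hence $\sigma\int_{\partial\Omega}|u|^p\,d\nu\ge c_\Omega'\,\sigma$. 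Combining, $\lambda_p(\Omega,\sigma)\ge c_\Omega\,C_p\,\sigma^{-p/(p-1)}\cdot\sigma=c_\Omega\,C_p\,\sigma^{-1/(p-1)}\to+\infty$.

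For \eqref{lim-D} I would invoke $\mu_p(\Omega)=C_p$ for bounded convex $\Omega$ (\cite{MMP}): there is a sequence $u_k\in W_0^{1,p}(\Omega)\subset W^{1,p}(\Omega)$ with $\int_\Omega|\nabla u_k|^p\,dx\big/\!\int_\Omega|u_k|^p\delta^{-p}\,dx\to C_p$, the denominators being finite by \eqref{hardy-general}. Since each $u_k$ vanishes on $\partial\Omega$, $\Q_p[\sigma,u_k]=\int_\Omega|\nabla u_k|^p\,dx$ for every $\sigma$, while as $\sigma\to\infty$ the constant $\alpha(\sigma)$ decreases to $0$ and monotone convergence gives $\|u_k\|_{p,\sigma}^p\to\int_\Omega|u_k|^p\delta^{-p}\,dx$. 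Fixing $k$ and letting $\sigma\to\infty$ therefore yields $\limsup_{\sigma\to\infty}\lambda_p(\Omega,\sigma)\le \int_\Omega|\nabla u_k|^p\,dx\big/\!\int_\Omega|u_k|^p\delta^{-p}\,dx$; letting $k\to\infty$ gives $\limsup_{\sigma\to\infty}\lambda_p(\Omega,\sigma)\le C_p$, which combined with the lower bound proves \eqref{lim-D}.

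For \eqref{inf} the lower bound is again $\lambda_p(\Omega,\sigma)\ge C_p$ for every convex $\Omega$. For the matching upper bound I would test on the balls $\Omega_R=B(0,R)$ with $u_R(x)=g(\delta(x))$, $\delta(x)=R-|x|$, where $g(t)=(t+\alpha)^{1-1/p}$ for $0\le t\le R/2$ and $g$ equals the constant $(R/2+\alpha)^{1-1/p}$ for $t\ge R/2$; note $u_R$ is Lipschitz on $\overline{B(0,R)}$ and constant near the origin. A direct computation in polar coordinates gives
$$
\Q_p[\sigma,u_R]=C_p\,\omega_{n-1}\,I_R+\Big(\frac{p-1}{p}\Big)^{p-1}\omega_{n-1}R^{n-1},\qquad \|u_R\|_{p,\sigma}^p=\omega_{n-1}\,I_R+O(R^{n-1}),
$$
where $\omega_{n-1}$ is the area of $S^{n-1}$, $I_R=\int_0^{R/2}(t+\alpha)^{-1}(R-t)^{n-1}\,dt$, the Robin contribution is a constant multiple of $R^{n-1}$ by the identity $\sigma\alpha^{p-1}=(\frac{p-1}{p})^{p-1}$, and the $O(R^{n-1})$ error comes from the core $\{|x|<R/2\}$ on which $u_R$ is constant. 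Since $I_R\ge (R/2)^{n-1}\log\frac{R/2+\alpha}{\alpha}$, which grows faster than $R^{n-1}$, the lower-order terms are asymptotically negligible and $\lambda_p(B(0,R),\sigma)\le \Q_p[\sigma,u_R]/\|u_R\|_{p,\sigma}^p\to C_p$ as $R\to\infty$; hence $\inf_{\Omega\ \mathrm{convex}}\lambda_p(\Omega,\sigma)\le C_p$, proving \eqref{inf}. The main obstacle is the quantitative eigenvalue bound $\mu_1(\sigma)\gtrsim\sigma$ in \eqref{lim-N}, which requires carefully tracking the $\sigma$-dependence in the Poincar\'e and trace inequalities; the remaining steps are routine once the Hardy inequality of Section~\ref{sec-hardy} and the identity $\mu_p(\Omega)=C_p$ of \cite{MMP} are in hand.
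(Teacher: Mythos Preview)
Your proof is correct and follows essentially the same approach as the paper for all three parts: bound the weight below by $\alpha^{-p}$ to reduce \eqref{lim-N} to the first Robin $p$-eigenvalue, use a Dirichlet minimising sequence and monotone convergence for \eqref{lim-D}, and test on large balls with $u_R\sim(\delta+\alpha)^{(p-1)/p}$ for \eqref{inf}. The one point worth flagging is \eqref{lim-N}: the estimate $\mu_1(\sigma)\ge c_\Omega\,\sigma$ that you obtain via a Poincar\'e/trace dichotomy is in the paper a one-liner---for $\sigma\le 1$ one has $\Q_p[\sigma,u]\ge\sigma\,\Q_p[1,u]$, hence $\mu_1(\sigma)\ge\sigma\,\mu_1(1)>0$---so what you call ``the main obstacle'' is in fact immediate; also, your cutoff of the ball test function near the origin is harmless but unnecessary, since $(R+\alpha-|x|)^{(p-1)/p}$ is already Lipschitz on $\overline{B_R}$.
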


\begin{proof}
To prove \eqref{lim-N} we first note that that there exists a constant $c$, depending only on $R_{in}$, such that for all $\sigma\leq 1$ and all $x\in\Omega$ we have $(\delta(x) +\alpha)^p \geq c\, \sigma^{\frac{p}{1-p}}$. Hence 
\begin{align*} 
\lambda_p(\Omega, \sigma) & \geq c \,\sigma^{\frac{p}{1-p}}  \inf_{u\in W^{1,p}(\Omega)}\, \frac{\Q_{p}[\sigma, u]}{\|u\|^p_{L^p(\Omega)}} \, \geq \,c\, \sigma^{\frac{1}{1-p}}  \inf_{u\in W^{1,p}(\Omega)}\, \frac{\Q_{p}[1, u]}{\|u\|^p_{L^p(\Omega)}} \ \geq \ \tilde c\ \sigma^{\frac{1}{1-p}}. 
\end{align*}
holds for all $\sigma\leq 1$.
This proves \eqref{lim-N}. To prove \eqref{lim-D} let $u_j \in W_0^{1,p}(\Omega)$ be a minimising sequence for the variational problem \eqref{var-prob-dir}. Since $\alpha\to 0$ as $\sigma\to\infty$, the monotone convergence shows that
\begin{align*}
\limsup_{\sigma\to\infty}  \lambda_p(\Omega, \sigma) & \leq \limsup_{\sigma\to\infty} \frac{\Q_{p}[\sigma, u_j]}{\int_{\Omega} 
(\delta(x)+ \alpha)^{-p} \ |u_j(x)|^p\, dx}  = \frac{\int_\Omega |\nabla u_j(x)|^p \,dx}{\int_{\Omega} \ |u_j(x)/\delta(x)|^p\, dx}
\end{align*}
holds for all $j$. By letting $j\to \infty$ we get 
$$
\limsup_{\sigma\to\infty}  \lambda_p(\Omega, \sigma)  \leq C_p.
$$
This in combination with \eqref{non-sharp} implies \eqref{lim-D}. 

\smallskip

\noindent Finally, to prove \eqref{inf} we consider the example $\Omega=B_R$, i.e. the the ball centered at origin with radius $R$. Let  
\begin{align*}
u_R(x) = (R + \alpha - |x|)^{(p-1)/p}.
\end{align*}
Then
\begin{align*}
\lambda_p(B_R, \sigma) 
\leq C_p + \frac{ \sigma \alpha^{p - 1} R^{n - 1}}{\int_0^R r^{n-1} (R + \alpha - r)^{-1} dr}.
\end{align*}
Since 
$$
\lim_{R\to \infty}  \frac{ \sigma \alpha^{p - 1} R^{n - 1}}{\int_0^R r^{n-1} (R + \alpha - r)^{-1} dr} = 0,
$$
this shows that 
$$
\inf_{\Omega \ \text{convex}} \lambda_p(\Omega, \sigma)  \leq C_p.
$$
The opposite inequality follows from Theorem \ref{cor-1}.
\end{proof}

\section{\bf Concentration effect}
\label{sec-conc}

\noindent In this section we are going to study the properties of the minimizing sequences of the problem \eqref{var-prob} in the case $\Gamma\neq \emptyset$. Consider first the (normalized) minimizing sequence constructed in the proof of Proposition \ref{prop-sharp}. More precisely, let 
$$
v_n = n^{-\frac 1p}\, u_{1/n}, \qquad n\in\N,
$$ 
where $u_\eps$ is given by \eqref{u-eps}. In view of \eqref{grad} and \eqref{weighted} it is straightforward to verify that
\begin{equation}  \label{seq-cond}
v_n \xrightarrow{w} 0 \quad \text{in } \ W_{0,\Gamma}^{1,p}(\Omega), \quad \text{and} \quad  \liminf_{n\to\infty} \|v_n\|_{p,\sigma} >0.
\end{equation}
Moreover, we observe that $v_n$ concentrates at $\Gamma$. Indeed, we have 
\begin{equation} \label{concentration}
\nabla v_n\ \to 0 \quad \text{in} \quad  L^p_{loc}(\Omega).
\end{equation}
Below we are going to show that {\it any} minimizing sequence satisfying \eqref{seq-cond} concentrates at $\Gamma$ in the sense of \eqref{concentration}.

\begin{theorem} \label{thm-conc}
Let $v_n$ be a minimizing sequence for the problem \eqref{var-prob}. Assume that $v_n$ satisfies \eqref{seq-cond}. Then 
\begin{equation} \label{eq-conc}
\int_M |\nabla v_n|^p \, \to \, 0
\end{equation}
for any compact set $M \subset \overline{\Omega}\setminus\overline{\Gamma}$. 
\end{theorem}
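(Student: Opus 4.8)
The plan is to argue by a localization-and-contradiction scheme built on the sharp Hardy inequality of Proposition~\ref{prop-hardy}. Write the defect of the Hardy inequality: for $u\in W^{1,p}_{0,\Gamma}(\Omega)$ set
\[
\mathcal{D}[u] := \Q_p[\sigma,u] - C_p\int_\Omega \frac{|u|^p}{(\delta+\alpha)^p}\,dx \ \geq\ (p-1)\,C_p\int_\Omega \frac{|u|^p}{(R_{in}+\alpha)^p}\,dx\ \geq\ 0 .
\]
Since $\{v_n\}$ is a minimizing sequence with $\lambda_p(\Omega,\sigma)=C_p$ (this is where $\Gamma\neq\emptyset$ enters, via Theorem~\ref{thm-1}), and $\|v_n\|_{p,\sigma}$ stays bounded above and below by \eqref{seq-cond}, we get $\mathcal{D}[v_n]\to 0$. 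In particular $\int_\Omega |v_n|^p(R_{in}+\alpha)^{-p}\,dx\to 0$, hence $v_n\to 0$ in $L^p(\Omega)$ (the weight $(R_{in}+\alpha)^{-p}$ is bounded below on $\Omega$). Combined with $v_n\xrightharpoonup{w}0$ in $W^{1,p}$, this gives $v_n\to0$ strongly in $L^p(\Omega)$ and $\int_\Omega|\nabla v_n|^p\,dx$ bounded; moreover $\int_{\partial\Omega}\sigma|v_n|^p\,d\nu\to0$ by the compact embedding $W^{1,p}(\Omega)\hookrightarrow L^p(\partial\Omega)$ away from $\Gamma$ (on $\Gamma$ itself $v_n$ vanishes in trace). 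So in fact $\Q_p[\sigma,v_n]=\int_\Omega|\nabla v_n|^p\,dx + o(1)$ and the whole gradient energy is asymptotically concentrated by the Hardy identity: $\int_\Omega|\nabla v_n|^p\,dx = C_p\int_\Omega |v_n|^p(\delta+\alpha)^{-p}\,dx + o(1)$.

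The second and main step is to upgrade this to local vanishing of the gradient on compact sets $M\subset\overline\Omega\setminus\overline\Gamma$. The idea is to revisit the proof of the Hardy inequality itself, which is obtained by integrating the one-dimensional estimate of Lemma~\ref{lem1} along normals. In that one-dimensional inequality \eqref{eq-1d} there is, after the use of the Young inequality \eqref{young}, a genuine quantitative slack: equality in $A^p\ge pAB^{p-1}-(p-1)B^p$ forces $A=B$, and equality in the H\"older step \eqref{upper-1} forces $u'$ to be (up to sign) a fixed power of $|f(b)-f(t)|$, i.e.\ $u$ proportional to the extremal profile $\delta^{(p-1)/p}$. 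Away from $\Gamma$ we have $\alpha(\pi(x))$ bounded below by a positive constant on a neighborhood of $M$ (since $\sigma\in L^\infty(\partial\Omega\setminus\overline\Gamma)$ and $\pi$ maps a neighborhood of $M$ into $\partial\Omega\setminus\overline\Gamma$), so on such a neighborhood the weight $(\delta+\alpha)^{-p}$ is bounded, whence $\int_{M'} |v_n|^p(\delta+\alpha)^{-p}\to0$ because $v_n\to0$ in $L^p$. Feeding this back into the (reversed) Hardy computation restricted to a tube around $M$ and tracking the Young/H\"older defect terms, one concludes $\int_M|\nabla v_n|^p\to0$.

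More concretely, I would test the global Hardy identity against a cutoff. Pick $\chi\in C_c^\infty(\Omega\setminus\overline\Gamma)$ with $\chi\equiv1$ on $M$, $0\le\chi\le1$. The functions $\chi v_n$ lie in $W^{1,p}_0$ of a neighborhood of $\mathrm{supp}\,\chi$ where $\alpha$ is bounded below, so by a Rellich argument and the Dirichlet Hardy inequality \eqref{hardy-general} on that subdomain, plus $v_n\to0$ in $L^p$, one controls $\int|\nabla(\chi v_n)|^p$: indeed, expanding $\nabla(\chi v_n)=\chi\nabla v_n+v_n\nabla\chi$ and using that $\int|v_n\nabla\chi|^p\to0$, it suffices to bound $\int\chi^p|\nabla v_n|^p$. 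For this I would use the layer-cake structure once more: on the tube $\{\delta<\eps_0\}\cap\mathrm{supp}\,\chi$ apply the refined 1D inequality along normals to $\chi v_n$ — which has zero boundary value, so the $\sigma$ term drops — to get $\int \chi^p|\nabla v_n|^p \le C_p\int \chi^p |v_n|^p(\delta+\alpha)^{-p} + (\text{error from }\nabla\chi) \to 0$; on the interior part $\{\delta\ge\eps_0\}$, $v_n\to0$ in $L^p$ combined with the minimizing property and the already-established convergence $\int|\nabla v_n|^p = C_p\int|v_n|^p(\delta+\alpha)^{-p}+o(1)$ localizes the gradient away from this region. Summing gives \eqref{eq-conc}.

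The main obstacle is the second step: turning the \emph{global} asymptotic equality in the Hardy inequality into \emph{local} gradient decay. The subtlety is that a minimizing sequence need not converge to the one-dimensional extremal in any strong sense on compact subsets a priori; one must exploit that the Hardy inequality is built by fibering over normals and that the nonnegative remainder $(p-1)C_p\int|v_n|^p(R_{in}+\alpha)^{-p}$ already forces $v_n\to0$ in $L^p$, which kills the denominator locally once $\alpha$ is bounded below there. The technical care needed is in handling the change of variables to the $(\delta,\omega)$ coordinates near $\partial\Omega$ (valid only on $\{\delta<\eps_0\}$, using \cite{se}) and in estimating the commutator terms generated by the cutoff $\chi$; these are routine but require the $C^2$ regularity of $\partial\Omega$ and the fact that $\nabla\chi$ is supported away from $\delta=0$. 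Once these error terms are absorbed, the convergence \eqref{eq-conc} follows.
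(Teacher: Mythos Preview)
Your first step is correct and gives a pleasant alternative to Rellich: the remainder term in Proposition~\ref{prop-hardy} forces $\int_\Omega |v_n|^p (R_{in}+\alpha)^{-p}\,dx\to 0$. (A small caveat: if $\sigma$ vanishes on part of $\partial\Omega\setminus\Gamma$, then $\alpha=\infty$ there and the weight $(R_{in}+\alpha)^{-p}$ is not bounded below, so this alone need not give $v_n\to 0$ in $L^p(\Omega)$; but you also note that Rellich and the weak convergence in \eqref{seq-cond} give this directly, which is the paper's route.)

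The second step, however, has a genuine gap. In the displayed inequality
\[
\int \chi^p |\nabla v_n|^p \ \le\ C_p \int \chi^p\, \frac{|v_n|^p}{(\delta+\alpha)^p} + (\text{error from }\nabla\chi)
\]
the Hardy inequality is being invoked in the wrong direction: Lemma~\ref{lem1} and its integrated versions give a \emph{lower} bound on the gradient energy, not an upper bound. An asymptotic global equality $\int_\Omega|\nabla v_n|^p = C_p\|v_n\|_{p,\sigma}^p + o(1)$ does not localize through a cutoff $\chi$ supported near $M$ in the way you describe, because multiplying by $\chi$ destroys the minimizing property and there is no mechanism in your sketch that transfers the tight global balance to the region $\mathrm{supp}\,\chi$. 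The phrase ``localizes the gradient away from this region'' is where the actual work should be, and it is not carried out.

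The paper's argument avoids this by localizing on the \emph{other} side: one argues by contradiction, fixing $\gamma>0$ with $\liminf_n\int_K|\nabla v_n|^p\ge\gamma$, and introduces a thin tube $\Omega_\eps$ over $\overline\Gamma$ (not over $M$). On $\Omega_\eps$ the functions $v_n$ vanish on the base $\Gamma$, so the one-dimensional Dirichlet Hardy inequality along inner normals yields
\[
\int_{\Omega_\eps}|\nabla v_n|^p \ \ge\ (1+o_\eps(1))\,C_p \int_{\Omega_\eps}\frac{|v_n|^p}{\delta^p}\, .
\]
On the complement $\Omega_\eps'=\overline\Omega\setminus\Omega_\eps$ the weight $(\delta+\alpha)^{-p}$ is bounded, so $a_n:=\int_{\Omega_\eps'}|v_n|^p(\delta+\alpha)^{-p}\to 0$. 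Dropping the boundary term and the gradient over $\Omega_\eps'\setminus K$ from the numerator, one gets
\[
\frac{\Q_p[\sigma,v_n]}{\|v_n\|_{p,\sigma}^p}\ \ge\ \frac{\int_{\Omega_\eps}|\nabla v_n|^p + \gamma}{\int_{\Omega_\eps}|v_n|^p(\delta+\alpha)^{-p} + a_n}\ \ge\ (1+o_\eps(1))\,C_p + \frac{\gamma}{\sup_n\|v_n\|_{p,\sigma}^p}\, ,
\]
which for small $\eps$ strictly exceeds $C_p$, contradicting the minimizing property. The point you are missing is that the Hardy lower bound should be applied on the tube adjacent to $\Gamma$, where it accounts for essentially all of the weighted $L^p$ mass, leaving the hypothetical extra gradient $\gamma$ on $K$ uncompensated in the Rayleigh quotient.
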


\begin{proof}
Let $n_y$ denote the inner normal vector to $\partial\Omega$ at a point $y\in\partial\Omega$. For a given $\eps>0$ we define 
\begin{equation} \label{omega-eps}
\Omega_\eps = \{ x\in\Omega\, :\, \exists \, t\in [0,\eps], \ \exists\, y\in\overline{\Gamma} \ :\ x=y +t \,n_y\}.
\end{equation}
From the regularity assumptions on $\partial\Omega$ it follows that $\Omega_\eps$ is not self-intersecting for $\eps$ small enough. 

Suppose now that \eqref{eq-conc} is false. Then there exists a compact set $K \subset \overline{\Omega}\setminus\overline{\Gamma}$ and a number $\gamma$ such that 
\begin{equation} \label{false}
\liminf_{n\to\infty} \int_K |\nabla v_n|^p \, \geq \, \gamma.
\end{equation} 
Let us now take $\eps$ small enough such that $K \subset \Omega_\eps' : =\overline{\Omega}\setminus\Omega_\eps$. This is possible due to the assumption on $K$. From the boundedness of $v_n$ in $W^{1,p}(\Omega)$ and from the Hardy inequality we infer that 
\begin{equation} \label{w-infty}
\sup_n \|v_n\|_{p,\sigma} < \infty. 
\end{equation}
By the Rellich-Kondrashov theorem and the first part of \eqref{seq-cond} 
$$
v_n \to 0 \quad \text{in} \ \  L^p_{loc}(\Omega).
$$
Moreover, $(\delta+\alpha)^{-1} \in L^\infty(\Omega_\eps')$. Hence in view of \eqref{w-infty} we have
\begin{equation} \label{aux}
a_n := \int_{\Omega_\eps'} \Big |\frac{v_n}{\delta+\alpha}\Big |^p \to 0  
\end{equation}
We thus obtain the following lower bound: 
$$
\frac{\Q_p[\sigma,v_n]}{\|v_n\|^p_{p,\sigma}}\,  \geq\,  \frac{\int_{\Omega_\eps} |\nabla v_n|^p +\gamma}{\int_{\Omega_\eps} \left |\frac{v_n}{\delta+\alpha}\right |^p +a_n}.
$$
Following \cite{MMP} we now pass to the coordinates $(\delta,\omega)$ in $\Omega_\eps$. Using the one-dimensional Hardy inequality and \eqref{jacob} we find that 
\begin{align*}
\int_{\Omega_\eps} |\nabla v_n|^p & \geq  \int_\Gamma \int_0^\eps |\partial_\delta\, v_n|^p\, J(\delta,\omega)\, d\delta\, d\omega \\
& \geq (1+o(1))\, C_p  \int_\Gamma \int_0^\eps |v_n/\delta|^p\, J(\delta,\omega)\, d\delta\, d\omega \\
& = (1+o(1))\, C_p  \int_{\Omega_\eps} |v_n/\delta|^p ,
\end{align*}
where $o(1)$ denotes a quantity which tends to zero as $\eps\to 0$.
Hence for $\eps$ small enough we have
$$
\liminf_{n\to\infty}  \frac{\Q_p[\sigma,v_n]}{\|v_n\|^p_{p,\sigma}}\, \geq \, (1+o(1))\, C_p +\frac{\gamma}{\sup_n \|v_n\|^p_{p,\sigma} } \,  > \, C_p\, , 
$$
see \eqref{w-infty} and \eqref{aux}. This is in contradiction with the fact that $v_n$ is a minimizing sequence.  
\end{proof}

\begin{remark}
The concentration effect in the case $\Gamma=\pd\Omega$ was proved in \cite{MMP}.
\end{remark}


\section{\bf Hardy inequality on a complement of a ball} 
\label{sec-outside}
In this section we are going to discuss the validity of a Hardy-type inequality for the functional \eqref{form} on a particular non-convex domain, namely on a complement of a ball in $\R^n$. Let us denote by $B_R^c$ the complement in $\R^n$ of the ball of radius $R$ centered in the origin. 

\smallskip
\noindent
The following result is certainly not new, but we prefer to give its proof for the sake of completeness. 

\begin{proposition} \label{prop-hardy-rn}
Assume that $n>p$. Then the  inequality 
\begin{equation}\label{hardy-rn-class}
\int_{B_R^c} |\nabla u |^p \ \geq \ \left(\frac{n-p}{p}\right)^p\, \int_{B_R^c} \frac{|u|^p}{|x|^p} \
\end{equation}
holds true for all $u\in W^{1,p}(B_R^c)$ and any $R>0$. 
\end{proposition}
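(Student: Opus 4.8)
The plan is to reduce the $n$-dimensional inequality \eqref{hardy-rn-class} to a one-dimensional Hardy inequality on a half-line by passing to spherical coordinates, exactly in the spirit of the classical Hardy inequality on $\R^n$. First I would assume $u\in C^\infty_0(\overline{B_R^c})$ (or $u\in C^1$ with compact support in $\overline{B_R^c}$), the general case following by density since $W^{1,p}(B_R^c)$ admits such an approximating sequence. Writing $x=r\omega$ with $r=|x|\geq R$ and $\omega\in S^{n-1}$, we have $|\nabla u|^p \geq |\partial_r u|^p$ pointwise, so that
\begin{equation} \label{red}
\int_{B_R^c} |\nabla u|^p \, dx \ \geq \ \int_{S^{n-1}} \int_R^\infty |\partial_r u(r\omega)|^p\, r^{n-1}\, dr\, d\omega,
\end{equation}
while the right-hand side of \eqref{hardy-rn-class} is $\big(\tfrac{n-p}{p}\big)^p \int_{S^{n-1}}\int_R^\infty |u(r\omega)|^p\, r^{n-1-p}\, dr\, d\omega$. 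Hence it suffices to prove, for each fixed $\omega$, the weighted one-dimensional inequality
\begin{equation} \label{1d-weight}
\int_R^\infty |g'(r)|^p\, r^{n-1}\, dr \ \geq \ \Big(\frac{n-p}{p}\Big)^p \int_R^\infty |g(r)|^p\, r^{n-1-p}\, dr
\end{equation}
for $g\in C^1([R,\infty))$ of compact support, where $g(r)=u(r\omega)$.

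To prove \eqref{1d-weight}, I would substitute $g(r)=r^{-(n-p)/p}\, h(r)$, which is the natural change of variable removing the weight; alternatively, and more robustly, I would argue directly via integration by parts. Set $w(r)=r^{n-p}$, so that $w'(r)=(n-p)\, r^{n-p-1}$. Integrating by parts and using that $g$ has compact support (hence no boundary term at $\infty$, and the boundary term at $r=R$ has the favourable sign since $w(R)=R^{n-p}>0$ when $n>p$),
\begin{equation} \label{ibp}
(n-p)\int_R^\infty |g|^p\, r^{n-p-1}\, dr \ = \ -\int_R^\infty |g|^p\, w'(r)\, dr \ \ \text{should be rewritten as}\ \ \int_R^\infty w(r)\, \big(|g|^p\big)'\, dr + R^{n-p}|g(R)|^p.
\end{equation}
Then $\big(|g|^p\big)' = p\, |g|^{p-2} g\, g'$, and applying Hölder's inequality with exponents $p$ and $p/(p-1)$ to $\int w\, p |g|^{p-1}|g'|\, r^{-1}\cdot r$ — more precisely bounding $\int_R^\infty p\, r^{n-p-1}|g|^{p-1}|g'|\, dr \leq p\big(\int r^{n-1}|g'|^p\big)^{1/p}\big(\int r^{n-p-1}|g|^p\big)^{(p-1)/p}$ by writing $r^{n-p-1}=r^{(n-1)(p-1)/p}\cdot r^{(n-p-1)-( n-1)(p-1)/p}$ and checking the remaining exponent is $(n-1)/p\cdot 1$ so that the two factors recombine correctly — yields, after dividing by the common factor, precisely the constant $\big(\tfrac{n-p}{p}\big)^p$. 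This is the standard argument; one may also invoke the classical Hardy inequality on $(R,\infty)$ with weights directly.

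\textbf{The main obstacle} I expect is purely bookkeeping: verifying that the exponent splitting in the Hölder step produces exactly $\big(\tfrac{n-p}{p}\big)^p$ and not a smaller constant, and handling the boundary term at $r=R$ with the correct sign (which is where the hypothesis $n>p$ is genuinely used — it makes $w(R)=R^{n-p}$ positive and also makes the weight $r^{n-p-1}$ locally integrable near $r=R$, so both integrals in \eqref{1d-weight} are finite for the relevant test functions). A secondary point to address carefully is the density step: one must justify that smooth functions with compact support in $\overline{B_R^c}$ are dense in $W^{1,p}(B_R^c)$ and that both sides of \eqref{hardy-rn-class} pass to the limit — the left side by strong $W^{1,p}$-convergence, and the right side by Fatou's lemma, which suffices since we only need the inequality to survive the limit. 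Since the problem itself notes the result "is certainly not new," I would keep the exposition brief, essentially presenting \eqref{red}, \eqref{1d-weight} and the one-line integration-by-parts plus Hölder computation.
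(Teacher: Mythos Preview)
Your proposal is correct and follows essentially the same overall strategy as the paper --- density, reduction to a one-dimensional weighted Hardy inequality on $[R,\infty)$, and then integration by parts plus H\"older --- but the \emph{reduction step} is genuinely different. The paper first replaces $u$ by $|u|$ and then invokes a rearrangement inequality (\cite[Thm.~3.4]{LL}) to assume without loss of generality that $u$ is radial, $u(x)=f(|x|)$, after which only a single one-dimensional inequality is needed. You instead keep $u$ non-radial, use the pointwise bound $|\nabla u|\geq |\partial_r u|$, and prove the one-dimensional inequality \eqref{1d-weight} for every fixed direction $\omega\in S^{n-1}$, integrating over the sphere at the end. Your route is more elementary: it avoids appealing to rearrangement on an exterior domain (where the standard P\'olya--Szeg\H{o} setup is on $\R^n$ rather than $B_R^c$, so the paper's one-line reduction deserves a moment's thought), and it makes transparent that no angular regularity is being used. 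The paper's route, on the other hand, keeps the computation to a single radial profile and reads slightly more compactly. A minor point: in your display \eqref{ibp} the sign in ``$-\int |g|^p w'$'' is reversed (it should be $+\int |g|^p w'$), but you have already flagged this as bookkeeping and the subsequent H\"older step, once the exponents are split as $r^{n-p}=r^{(n-p-1)(p-1)/p}\cdot r^{(n-1)/p}$, indeed produces exactly $\big(\tfrac{n-p}{p}\big)^p$ with the boundary term at $r=R$ having the favourable sign since $n>p$.
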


\begin{proof}
By density and by the inequality $|\nabla u (x)| \geq |\nabla |u(x) ||$, which holds for almost every $x\in B_R^c$, see e.g.~\cite[Thm.~6.17]{LL}, it suffices to prove the inequality for all positive functions $u\in C^\infty(B_R^c)$ supported in a compact set containing $B_R$. Moreover, in view of the rearrangement inequalities, see \cite[Thm.~3.4]{LL}, we may assume without loss of generality  that $u$ is radial, i.e.~$u(x) = f(|x|)$, where $f \in C^\infty([R,\infty)$ is non-negative and such that for some $\rho >R$ we have 
$$
r \geq  \rho \quad \Rightarrow \quad f(r)=0.
$$
Integration by parts together with the H\"older inequality then imply 
\begin{align*}
\int_R^\infty \frac{f(r)^p}{r^p}\, r^{n-1}\, dr & = \frac{1}{n-p} \, \big [ f(r)^p\, r^{n-p}\big]_R^\rho - \frac{p}{n-p} \int_R^\infty f(r)^{p-1}\, f'(r)\, r^{n-p}\, dr \\
 & \leq  \frac{p}{n-p}\int_R^\infty f(r)^{p-1}\, r^{\frac{(n-p-1)(p-1)}{p}}\ |f'(r)|\ r^{\frac{n-1}{p}}\, dr \\
 & \leq \frac{p}{n-p} \left ( \int_R^\infty \frac{f(r)^p}{r^p}\, r^{n-1}\, dr\right)^{\frac{p-1}{p}}\ \left(\int_R^\infty |f'(r)|^p\ r^{n-1}\, dr\right)^{\frac 1p}\, ,
\end{align*}
where we have used the positivity of $f$ in the second line. Hence
$$
\int_R^\infty \frac{f(r)^p}{r^p}\, r^{n-1}\, dr \ \leq \ \left(\frac{p}{n-p}\right)^p\, \int_R^\infty |f'(r)|^p\ r^{n-1}\, dr,
$$
and the claim follows.
\end{proof}

\noindent It is not difficult to verify that the constant $\left(\frac{n-p}{p}\right)^p$ cannot be improved and that inequality \eqref{hardy-rn-class} fails if $p\geq n$. It turns out that when we replace the left hand side by the functional \eqref{form} with $\sigma$ constant and positive, then \eqref{hardy-rn-class}, with a different constant,  extends also to the case $p > n$. 

\begin{theorem} \label{thm-outside}
Assume that $p>n$ and that $\sigma>0$. Then the inequality 
\begin{equation} \label{hardy-outside}
\int_{B_R^c} |\nabla u |^p  + \sigma\! \int_{\partial B_R} |u|^p  \ \geq \ C(\sigma, R) \, \int_{B_R^c} \frac{|u|^p}{|x|^p}, 
\end{equation}
with
\begin{equation}\label{constant}
C(\sigma, R) = \, \min\left\{   \left(\frac{p-n}{p}\right)^p,\  R^{\, p}\, \sigma^{\frac{p}{p-1}}\right\}\, .
\end{equation}
holds for all $u\in W^{1,p}(B_R^c)$.
\end{theorem}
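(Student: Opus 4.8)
The plan is to reduce to one dimension by symmetrization, exactly as in the proof of Proposition \ref{prop-hardy-rn}. By density and the pointwise bound $|\nabla u|\geq |\nabla |u||$ it suffices to treat non-negative $u\in C^\infty(B_R^c)$ with compact support; by the rearrangement inequality in \cite[Thm.~3.4]{LL} (applied on $B_R^c$, decreasing rearrangement in $|x|$) one reduces to radial $u(x)=f(|x|)$ with $f\in C^\infty([R,\infty))$, non-negative, vanishing for $r\geq\rho$ for some $\rho>R$. One must check that radialization does not increase the Dirichlet energy and does not increase the boundary term (it actually leaves $|u|^p$ on $\partial B_R$ essentially unchanged, or only decreases its integral, since the rearrangement is taken on spheres) while not decreasing the weighted $L^p$ norm on the right; this is routine. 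Thus everything comes down to proving the one-dimensional inequality
\begin{equation} \label{1d-outside}
\int_R^\infty |f'(r)|^p\, r^{n-1}\, dr + \sigma\, R^{n-1}\, f(R)^p \ \geq\ C(\sigma,R)\int_R^\infty \frac{f(r)^p}{r^p}\, r^{n-1}\, dr .
\end{equation}

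The second step is to prove \eqref{1d-outside}. I would mimic the integration-by-parts computation in Proposition \ref{prop-hardy-rn}, but now keeping the boundary term at $r=R$ that appears because $f(R)\neq 0$. Writing $\int_R^\infty r^{-p} f^p\, r^{n-1}\,dr$ and integrating by parts against $\frac{d}{dr}\big(r^{n-p}\big)$ (note $n-p<0$, so this is a negative power that decays), one gets
$$
(p-n)\int_R^\infty \frac{f(r)^p}{r^p}\, r^{n-1}\, dr \ =\ -\,R^{\,n-p}\, f(R)^p\ +\ p\int_R^\infty f(r)^{p-1} f'(r)\, r^{n-p}\, dr ,
$$
where the boundary term at $\infty$ vanishes by compact support and the sign is arranged so that $-R^{n-p}f(R)^p$ appears with a minus sign on the right — this is the term that the boundary contribution $\sigma R^{n-1} f(R)^p$ must absorb. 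Applying Hölder to the last integral as in Proposition \ref{prop-hardy-rn} gives
$$
(p-n)\,I \ \le\ -\,R^{\,n-p}\, f(R)^p + p\, I^{\frac{p-1}{p}}\, J^{\frac1p},
$$
with $I=\int r^{-p}f^p r^{n-1}$ and $J=\int |f'|^p r^{n-1}$. The point is that if $R^{n-p}f(R)^p \leq \sigma R^{n-1} f(R)^p$, i.e. $R^{p-1}\sigma\geq 1$ in the relevant regime, one can drop the negative term and conclude $I\leq\big(\tfrac{p}{p-n}\big)^p J$, which is more than \eqref{hardy-outside}; but when $\sigma R^{p-1}<1$ the boundary term on the left is too small to fully compensate, and that is exactly where the second candidate $R^p\sigma^{p/(p-1)}$ in \eqref{constant} enters.

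The third step handles the genuinely restrictive regime. Here I expect the main obstacle: one cannot simply throw away $f(R)$, and the naive Hölder bound is not enough, so one needs a more careful two-parameter argument. I would try a substitution-type argument: split the line into $[R,2R]$ (or $[R, cR]$) and $[2R,\infty)$, use the scale-invariant Hardy inequality on $[2R,\infty)$ with constant $((p-n)/p)^p$ (no boundary term needed there), and on the finite interval $[R,2R]$ use the one-dimensional Robin Hardy inequality of Lemma \ref{lem1}, which after the identification $t=r-R$, $b\sim R$ gives a contribution controlled by $\sigma$ through $\alpha=\frac{p-1}{p}\sigma^{1/(1-p)}$ and the weight $(t+\alpha)^{-p}$; on $[R,2R]$ one has $|x|=r\le 2R$ and $r=R+t$, so $(t+\alpha)^{-p}$ dominates $c\,R^{-p}$ times a factor involving $R^{p}\sigma^{p/(p-1)}$ when $\alpha\gtrsim R$. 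Balancing the two regions and tracking constants should produce precisely $C(\sigma,R)=\min\{((p-n)/p)^p,\ R^p\sigma^{p/(p-1)}\}$. An alternative, cleaner route I would also attempt is to look for an explicit supersolution: find $w(r)$ with $-\big(|w'|^{p-2}w'\,r^{n-1}\big)' = C(\sigma,R)\, r^{n-1-p}\, w^{p-1}$ on $(R,\infty)$ satisfying the Robin condition $-|w'(R)|^{p-2}w'(R) \le \sigma\, w(R)^{p-1}$ at $r=R$, of the form $w(r)=r^{\beta}$ or $w(r)=(r)^{\beta}$ with a second term, and then use the standard ground-state-substitution / convexity ("Lagrangian") inequality $|a|^p \geq |b|^p + p|b|^{p-2}b\cdot(a-b)$ to derive \eqref{hardy-outside} from $w$; the value of $C(\sigma,R)$ in \eqref{constant} strongly suggests that the optimal $w$ crosses over between the power $r^{(p-n)/p}$ (dominant for large $\sigma$) and a profile linear near $r=R$ adapted to the Robin datum (dominant for small $\sigma$), and the minimum in \eqref{constant} is the largest $C$ for which such a supersolution exists. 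I would carry out whichever of these two gives cleaner constants, with the supersolution method being my first choice.
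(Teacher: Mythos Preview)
Your symmetrization step is a real gap, not a routine one. Radial-decreasing rearrangement on the exterior domain $B_R^c$ sends the maximum of $u$ to the inner boundary $\partial B_R$, so $\int_{\partial B_R}|u^*|^p=|\partial B_R|\,(\sup u)^p\geq \int_{\partial B_R}|u|^p$; the Robin term goes the \emph{wrong} way, contrary to what you assert. There is no obvious rearrangement that simultaneously decreases $\int|\nabla u|^p$, decreases the boundary integral, and increases $\int|u|^p/|x|^p$ on $B_R^c$. Your splitting route $[R,2R]\cup[2R,\infty)$ would also not recover the exact constant \eqref{constant}, only something of the same order.

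The paper does precisely your ``alternative, cleaner route'' --- the convexity/supersolution method --- but applies it directly in $n$ dimensions with no reduction to radial functions. With $\delta(x)=|x|-R$ and $\gamma=\tfrac{p-n}{p}\,\sigma^{1/(1-p)}$, one plugs $\xi_1=\nabla u$ and $\xi_2=\beta\, u\,\nabla\delta/(\delta+\gamma)$ into the pointwise inequality $|\xi_1|^p\ge|\xi_2|^p+p\,|\xi_2|^{p-2}\xi_2\cdot(\xi_1-\xi_2)$ and integrates by parts. The boundary term that appears has coefficient $\beta^{p-1}\gamma^{1-p}$, and $\gamma$ is chosen so that this equals $\sigma$ once $\beta=(p-n)/p$. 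The two branches of the minimum in \eqref{constant} then come from an elementary pointwise comparison rather than any case analysis on $f(R)$: if $R>\gamma$ then $\delta+\gamma<|x|$; if $R\le\gamma$ then $\delta+\gamma\le(\gamma/R)\,|x|$. Substituting each bound and optimizing in $\beta$ gives \eqref{hardy-outside} with the stated constant. So the supersolution you were looking for is the single shifted power $w(x)=(\,|x|-R+\gamma)^{(p-n)/p}$, not a pure power $r^\beta$ or a patched profile, and the crossover in \eqref{constant} reflects whether the shift $\gamma$ is smaller or larger than $R$.
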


\begin{proof}
Let $\delta(x) = |x|-R$ and let 
$$
\gamma := \frac{p-n}{p}\, \sigma^{\frac{1}{p-1}}\, .
$$
From the convexity of the function $|x|^p$ in $\R^n$ it follows that 
\begin{equation}\label{conv-aux}
|\xi_1|^p \ \geq \ |\xi_2|^p + p\, |\xi_2|^{p-2}\ \xi_2 \cdot(\xi_1-\xi_2)
\end{equation}
holds for all $\xi_1, \xi_2\in\R^n$.  We apply \eqref{conv-aux} with 
$$
\xi_1 = \nabla u, \qquad \xi_2 = \frac{\beta\, u\, \nabla \delta}{\delta+\gamma},
$$
where $\beta>0$ is a parameter whose value will be specified later. Hence 
\begin{align} \label{eq-basic}
\int_{B_R^c} |\nabla u |^p & \geq \int_{B_R^c} \frac{\beta^p\, |u |^p}{(\delta+\gamma)^p} + p \int_{B_R^c} \frac{ \beta^{p-1} |u |^{p-1}}{(\delta+\gamma)^{p-1}}\, \nabla\delta \cdot\left(\nabla u - \frac{\beta\, u\, \nabla \delta}{\delta+\gamma}\right) 
\end{align}
Since $ |\nabla \delta| =1, \ \Delta \delta = \frac{n-1}{|x|}$ and since the normal derivative of $\delta$ is equal to $-1$ on $\partial B_R$, an integration by parts gives
\begin{align*} 
 \int_{B_R^c} \frac{|u |^{p-1}}{(\delta+\gamma)^{p-1}}\, \nabla\delta \cdot \nabla u & = -\gamma^{1-p} \int_{\partial B_R} |u|^p - (n-1)\int_{B_R^c} \frac{ |u |^p}{|x|\ (\delta+\gamma)^{p-1}} \\
&\quad +(1-p) \int_{B_R^c} \frac{|u |^{p-1}}{(\delta+\gamma)^{p-1}}\, \nabla\delta \cdot \nabla u  + (p-1) \int_{B_R^c} \frac{ |u |^p}{(\delta+\gamma)^p} .
\end{align*}
This in combination with \eqref{eq-basic} yields
\begin{align} 
\int_{B_R^c} |\nabla u |^p + \gamma^{1-p} \beta^{p-1} \int_{\partial B_R} |u|^p & \geq 
(p-1)(\beta^{p-1} -\beta^p) \int_{B_R^c}\frac{ |u |^p}{(\delta+\gamma)^p} \nonumber \\
& - (n-1) \, \beta^{p-1} \int_{B_R^c} \frac{ |u |^p}{|x|\ (\delta+\gamma)^{p-1}} \label{eq-basic1}
\end{align}
Assume now that $R> \gamma$ in which case $\left(\frac{p}{p-n}\right)^p\, R^{\, p}\, \sigma^{\frac{p}{p-1}} <1$. 
Then 
$$
\delta(x) +\gamma = |x|-R+\gamma  <  |x|
$$ 
and  the above inequality gives
\begin{align} 
\int_{B_R^c} |\nabla u |^p + \gamma^{1-p} \beta^{p-1} \int_{\partial B_R} |u|^p &\ \geq \
\Big( (p-n)\beta^{p-1} -(p-1)\beta^p\Big) \int_{B_R^c}\frac{ |u |^p}{|x|^p} \, .\label{eq-basic2}
\end{align}
The constant in front of the integral on the right hand side attains its maximum for 
\begin{equation} \label{beta}
\beta= \frac{p-n}{p}.
\end{equation}
Inserting this value of $\beta$ into \eqref{eq-basic2} we obtain \eqref{hardy-outside} in the case $R< \gamma$. 

\smallskip

\noindent If $R \geq \gamma$, then we have $\left(\frac{p}{p-n}\right)^p\, R^{\, p}\, \sigma^{\frac{p}{p-1}} \geq 1$ and 
$$
\delta(x) +\gamma = |x| -R + \gamma \ \leq \ \frac{\gamma}{R}\ |x|.
$$
Inequality \eqref{eq-basic1} then implies that 
\begin{align*} 
\int_{B_R^c} |\nabla u |^p + \gamma^{1-p} \beta^{p-1} \int_{\partial B_R} |u|^p &\ \geq \
\Big( (p-n)\beta^{p-1} -(p-1)\beta^p\Big) \left(\frac R\gamma\right)^p \int_{B_R^c}\frac{ |u |^p}{|x|^p}.
\end{align*}
Choosing $\beta$ as in \eqref{beta} we thus arrive again at \eqref{hardy-outside}. 
\end{proof}

\begin{remark}
{\bf (i)}. Note that for $\sigma$ large enough we have $C(\sigma, R) = \left(\frac{p-n}{p}\right)^p$ which is in modulus equal to the sharp constant in the inequality \eqref{hardy-rn-class} valid in the case $n>p$. On the other hand, for $\sigma$ small enough we have $C(\sigma, R)=R^{\, p}\, \sigma^{\frac{p}{p-1}} $ which vanishes in the limit $\sigma\to 0$, as expected. 

\smallskip

\noindent {\bf (ii)}. In the case $n=p$ we have $C(\sigma, R)=0$ which is natural since the inequality 
\begin{equation}\label{p=n}
\int_{B_R^c} |\nabla u |^n  + \sigma\! \int_{\partial B_R} |u|^n  \ \geq \ C  \int_{B_R^c} \frac{|u|^n}{|x|^n} \ , \qquad u \in W^{1,n}(B_R^c) 
\end{equation}
fails for any $C>0$ independently of $R$ and $\sigma$. To see this, consider the family of test functions
$$
u_k(x) = \left(1- \frac{\log (|x|/R)}{\log k}\right)_+ \qquad k\in \N, \quad x\in B_R^c. 
$$
By inserting $u_k$ into \eqref{p=n} and letting $k\to \infty$ it follows that \eqref{p=n} must fail whenever $C>0$. This is closely related to \cite[Ex.~2]{MMP} which shows that if $\Omega= B_R^c$ and $p=n$, then the best constant in the hardy inequality \eqref{hardy-general} is zero, i.e.~$\mu_p(B_R^c) =0$, see equation \eqref{var-prob-dir}. 
\end{remark}

\medskip


\section*{Acknowledgements}
T.~E. has been supported by the Swedish Research Council grant Nr. 2009-6073 and
H.~K. has been supported by the grant MIUR-PRIN 2010-11 number 2010A2TFX2 for the project ''Calcolo delle variazioni''.


\end{document}